\documentclass[12pt, a4paper]{amsart}

\usepackage{fullpage}
\usepackage[dvipsnames,table]{xcolor}
\usepackage{colortbl}
\definecolor{webcolor}{rgb}{0,0,1}
\definecolor{webbrown}{rgb}{.6,0,0}
\usepackage[
        colorlinks,
        linkcolor=webbrown, filecolor=webbrown,citecolor=webbrown,
        urlcolor=webbrown,
        pdfauthor={},
  pdftitle={},
]{hyperref}

\usepackage[utf8]{inputenc}
\usepackage{amsmath}
\usepackage{amsthm}
\usepackage{amssymb}
\usepackage[capitalise]{cleveref}
\usepackage{booktabs}
\usepackage[normalem]{ulem}
\usepackage{comment}
\usepackage{fvextra}

\usepackage{tikz}
\usetikzlibrary{arrows.meta,positioning,calc}
\usepackage{tikz-cd}

\definecolor{accent}{RGB}{200,80,20}

\def\R{\mathbb R}

\def\Z{\mathbb Z}

\def\Q{\mathbb Q}

\def\Vcv{V_{\mathrm{cv}}}

\def\II{\mathrm{II}}

\def\RR{\mathbb R}
\def\NN{\mathbb N}

\def\ZZ{\mathbb Z}
\def\FF{\mathbb F}
\def\QQ{\mathbb Q}
\def\LAT{\mathsf{Lat}}
\def\WLAT{\mathsf{WLat}}
\def\SET{\mathsf{Set}}
\def\GRAPH{\mathsf{Graph}}

\DeclareMathOperator{\Gcan}{\mathit{G}_{\mathrm{can}}}
\DeclareMathOperator{\sv}{sv}
\DeclareMathOperator{\cv}{cv}

\DeclareMathOperator{\rank}{rank}
\DeclareMathOperator{\GL}{GL}

\DeclareMathOperator{\Aut}{Aut}
\DeclareMathOperator{\Autred}{Aut_{\mathrm{red}}}

\DeclareMathOperator{\Smin}{\mathit{S}_{\mathrm{min}}}
\def\id{\mathrm{id}}

\newtheorem{theorem}{Theorem}[section]

\newtheorem{prop}[theorem]{Proposition}
\newtheorem{lemma}[theorem]{Lemma}
\theoremstyle{definition}
\newtheorem{definition}[theorem]{Definition}
\newtheorem{example}[theorem]{Example}
\newtheorem{remark}[theorem]{Remark}

\newtheorem*{problem*}{Problem}

\Crefname{cor}{Corollary}{Corollaries}
\Crefname{lemma}{Lemma}{Lemmas}
\Crefname{example}{Example}{Examples}
\Crefname{prop}{Proposition}{Propositions}
\AddToHook{env/cor/begin}{\crefalias{theorem}{cor}}
\AddToHook{env/prop/begin}{\crefalias{theorem}{prop}}
\AddToHook{env/lemma/begin}{\crefalias{theorem}{lemma}}
\AddToHook{env/definition/begin}{\crefalias{theorem}{definition}}
\AddToHook{env/example/begin}{\crefalias{theorem}{example}}
\AddToHook{env/remark/begin}{\crefalias{theorem}{remark}}

\title{Automorphisms and canonical forms of integer lattices}

\author{Simon Brandhorst}
\address{
Fakult\"at für Mathematik und Informatik\\
Universit\"at des Saarlandes\\
Campus Saarbrücken \\
Gebäude E2 4 \\
66123 Saarbrücken
- Germany - 
}
\email{brandhorst@math.uni-sb.de}

\author{Tommy Hofmann}
\address{
Naturwissenschaftlich-Technische Fakult\"at\\
Universit\"at Siegen\\
Walter-Flex-Straße 3\\
57068 Siegen\\
Germany}
\email{tommy.hofmann@uni-siegen.de}

\date{\today}

\subjclass[2020]{Primary 11H56; Secondary 11H55, 11E41, 17B22}

\keywords{Quadratic form, lattice, orthogonal group, canonical form, root system}

\begin{document}

\renewcommand{\theenumi}{(\roman{enumi})}%

\begin{abstract}
We improve the Plesken--Souvignier algorithm for computing automorphism
groups and isometries of positive definite integer quadratic lattices by exploiting
characteristic sublattices and refining vector enumeration. For lattices
with roots, we also use the Weyl group to accelerate the computation of
canonical forms. Experiments with concrete implementations demonstrate
substantial speedups and show that canonical forms can be computed at a
cost comparable to that of computing automorphism groups.
\end{abstract}
\maketitle

\section{Introduction}
An integer quadratic lattice consists of a finitely generated free abelian group $L\cong \ZZ^r$ and a quadratic form $q\colon L \to \QQ$. Such lattices appear in various contexts such as topology, algebraic geometry, number theory and cryptography.

We assume that the quadratic form $q$ is positive definite, that is, $q(x) \geq 0$ for all $x \in L$ and $q(x) = 0$ if and only if $x = 0$. In particular, this implies that the orthogonal group
$O(L) = \{f \in \mathop{\mathrm{GL}}(L) \mid q \circ f = q\}$ is finite.
Let $b(x,y)=q(x+y)-q(x)-q(y)$ denote the bilinear form associated with $q$.
Given an ordered basis $B=(e_1,\dots, e_r)$ of $L$, we call $G_B = (b(e_i,e_j))_{1 \leq i, j \leq r}$ its Gram matrix. A canonical form is a function $\Gcan$ that associates to any lattice $L$ a Gram matrix $\Gcan(L)$ of $L$ such that two lattices $L,L'$ are isometric if and only if $\Gcan(L)=\Gcan(L')$.
In this work, we improve algorithms for the following three computational problems:
\begin{enumerate}
\item the computation of generators and the order of the orthogonal group $O(L)$;
\item the determination of whether two given lattices $L,L'$ are isometric and, if so, the computation of an explicit isometry $L \to L'$;
\item the computation of a canonical form $\Gcan$.  
\end{enumerate}

Besides the intrinsic interest of these problems, our motivation stems from the computation of the class number of a lattice:
Two lattices $L,L'$ are in the same genus if $L \otimes \R \cong L' \otimes \R$ and $L\otimes \ZZ_p \cong L' \otimes \ZZ_p$ for all rational primes $p$. The number of isometry classes in a genus is finite and is called the class number. It can be computed by Kneser's neighbor method (see the survey \cite{MR4675113}), which rests on solutions of the computational challenges (i)--(iii).
For recent applications and examples of such computations, see the works of the authors~\cite{MR4608887}, Chenevier~\cite{Chenevier2025} and Elkies~\cite{Elkies2026}.

All currently known algorithms addressing any of the three problems require finding a shortest vector of $L$ (or its dual), a problem that is known to be NP-hard under randomized reductions by work of Ajtai~\cite{Ajtai:1998} and can be solved in time $2^{O(n)}$. Haviv--Regev~\cite{Haviv-Regev2014} give $n^{O(n)}$-time and Aggarwal--Jiang--Li--Liu~\cite{Aggarwal-Kaijie-Zihan-Yichen2026} give $n^{n+o(n)}$-time solutions for the three problems (see also~\cite{Ducas-vanWoerden2022}).
These theoretical bounds are far beyond anything practical.
However, for most interesting lattices (of dimension $\leq 32$) coming from theoretical applications, shortest vectors can be efficiently enumerated using the algorithms of Fincke--Pohst \cite{FinckePohst1985} or Kannan~\cite{Kannan1983}; see also \cite{Micciancio2015}.
We therefore assess practical performance by measuring running times on datasets of lattices. 
To facilitate future comparisons of running times, we make these datasets available as a versioned benchmark set~\cite{LatticeAutBench}.

\subsection*{Orthogonal groups and isometry tests}
For about 30 years, the original algorithm of Plesken and Souvignier (1997) \cite{plesken-souvignier:1997} was the fastest published algorithm for isometry and automorphism computations of integer quadratic lattices.
Between 2020 and 2026, Chenevier \cite{Chenevier2020,Chenevier2025} and Ta\"ibi \cite{Taibi2024} substantially improved it for lattices with many roots,
which was crucial for determining the class number of unimodular lattices of ranks 28 and 29. There is also the closed-source 2009 Magma~\cite{MR1484478} implementation of Unger, which, according to the documentation, works ``with ordered partition methods due to Leon''.

Our main contribution is an improved version of the Plesken--Souvignier algorithm, which exploits characteristic sublattices to construct $O(L)$-invariant orthogonal decompositions of $L \otimes \Q$.
We also show how such decompositions can be used to reduce the cost of vector enumeration.
Although nontrivial invariant decompositions are not available for every
lattice, they occur for most lattices in our datasets. Our experiments show
significant speedups over existing implementations.
The running times are reported in \Cref{tab:automorphism-group-timings} (see \S{}\ref{sec:exp} for more details).

\begin{table}[htbp]
  \caption{Running times in seconds for the automorphism group computation on
    the datasets (A)--(C) of \cite{LatticeAutBench} described in \S{}\ref{subsec:datasets}.}
  \label{tab:automorphism-group-timings}

  \centering
  \setlength{\tabcolsep}{4.5pt}
  \begin{tabular}{l*{9}{r}}
    \toprule
    & \multicolumn{3}{c}{(A)}
    & \multicolumn{3}{c}{(B)}
    & \multicolumn{3}{c}{(C)} \\
    \cmidrule(lr){2-4}
    \cmidrule(lr){5-7}
    \cmidrule(lr){8-10}
    & Mean & Median & Max.
    & Mean & Median & Max.
    & Mean & Median & Max. \\
    \midrule
    New
      & 0.003 & 0.003 & 0.016
      & 0.009 & 0.007 & 0.091
      & 0.018 & 0.008 & 13.413 \\
    qfautors
      & 13.754 & 0.004 & 14780.720
      & 0.011 & 0.007 & 0.473
      & 0.040 & 0.008 & 42.407 \\
    Magma
      & -- & 0.100 & --
      & 0.376 & 0.200 & 3.410
      & 0.540 & 0.260 & 69.110 \\
    \bottomrule
  \end{tabular}
\end{table}
On certain lopsided lattices in the dataset the new algorithm is about a million times faster than state-of-the-art methods while having a comparable median and a smaller maximum running time (cf. \Cref{tab:automorphism-group-timings}). Lattices of small determinant tend to be less lopsided. This is reflected in the datasets (B) and (C), which consist of unimodular lattices of ranks 25 and 26 without norm-$1$ vectors and are less lopsided than the ones in (A), which are of rank 17 and have determinant $90$, $112$ or $120$; see \Cref{fig:lopsidedness-distributions}.

\subsection*{Canonical forms}

Canonical forms are particularly useful for classifying large collections of
lattices using Kneser's neighbor method and for identifying lattices in databases such as the LMFDB~\cite{lmfdb}. Computing a canonical form for each
lattice reduces repeated isometry tests to comparisons of matrices. More
generally, any complete invariant can be used for this purpose. 

Dutour Sikiri\'c, Haensch, Voight and van~Woerden~\cite{SHVW} developed an
algorithm for computing canonical forms of positive definite lattices using
characteristic vector sets. Together with graph canonicalization this yields a complete
invariant, and the resulting ordering of the vectors can be used to recover
a canonical Gram matrix.

We refine this construction for lattices with roots
by incorporating the action of the Weyl group, which results in much smaller graphs. This gives a significant practical improvement: on our datasets,
using the Weyl group reduces the vector sets by mean factors of
up to $6.3\cdot 10^3$.
Equally important, the total canonical form computation takes time comparable to our automorphism
group computation; see \S{}\ref{subsec:inv-exp} and
\Cref{tab:characteristic-vector-set-data}.

\subsection*{Organization}
In \S{}\ref{sec:plesken-souvignier}
we recall the Plesken--Souvignier algorithm and discuss its limitations.
Section~\ref{sec:variations} develops variants for strongly well-rounded,
well-rounded and lopsided lattices, together with a method for compressing
the additional bilinear forms used in these computations.
Section~\ref{sec:weyl} reviews work of Chenevier and Ta\"ibi on how the Weyl group can be used to reduce
automorphism group computations. Next, \S{}\ref{sec:targeted-enumeration}
uses invariant orthogonal decompositions to improve the enumeration of
vectors required by the Plesken--Souvignier algorithm. Section~\ref{sec:isometry} treats isometry search. 
Section~\ref{sec:invariants}
incorporates the Weyl group action into the construction of complete
invariants and canonical forms via graph canonicalization.
Finally, \S{}\ref{sec:exp} presents our implementations and computational
experiments, comparing the performance of the new algorithms with existing
methods on several datasets of lattices.

\subsection*{AI disclosure}
We used AI for coding assistance. The mathematical ideas and the basic structure of the code are exclusively ours. An AI generated examples of strongly but not perfectly well-rounded lattices for \Cref{example:strongly-well-rounded} and suggested the reference for \Cref{lem:close_vector_root}. It also helped with final proofreading of the manuscript.

\subsection*{Acknowledgments}
We thank Stevell M\"uller for discussions and encouragement.
The second author also thanks Ga\"etan Chenevier and Olivier Ta\"ibi
for helpful discussions.
The authors gratefully acknowledge support from the Deutsche Forschungsgemeinschaft --- Project-ID 286237555 --- TRR 195.

\section{The algorithm of Plesken and Souvignier}\label{sec:plesken-souvignier}
The algorithm of Plesken and Souvignier \cite{plesken-souvignier:1997} takes as input
a list of integer matrices $G_1, \dots, G_n \in \ZZ^{r \times r}$ such that
$G_1$ is positive definite and symmetric.
It outputs generators for the group
\[\{U \in \GL_r(\ZZ) \mid \forall i \in \{1,\dots, n \}: U G_i U^t = G_i \}.\]
In more abstract terms:
Let $L$ be a finitely generated free abelian group of rank $r$ and
\[b=(b_1,\dots, b_n)\colon L \times L \longrightarrow \ZZ^n,\, (x, y) \longmapsto (b_1(x,y), \dotsc,b_n(x,y)) \]
a $\ZZ^n$-valued bilinear form such that its first component $b_1$ is symmetric and positive semidefinite.
The pair $(L,b)$ is called a \textit{vector-valued lattice} and
its automorphism group is defined to be 
\[\Aut(L,b)=\{ f \in \GL(L) \mid \forall x,y \in L: b(f(x),f(y))=b(x,y)\}.\]
For the remainder of this section, we assume that $b_1$ is positive definite.
The concrete setting of the Plesken--Souvignier algorithm can be recovered by choosing an ordered $\ZZ$-basis $(e_1,\dots, e_r)$ for $L$, that is, an isomorphism $L \to \ZZ^r$, and by considering $G_i := (b_i(e_k,e_l))_{1\leq k,l\leq r}$ for $1 \leq i \leq n$. An isomorphism of two lattices $(L,b)$ and $(L',b')$ is a $\ZZ$-linear bijection $f\colon L \to L'$ with $b'(f(x),f(y))=b(x,y)$ for all $x,y\in L$. A variant of the Plesken--Souvignier algorithm tests whether $L\cong L'$ and computes an isomorphism if it exists.
\begin{example}
For $n=1$ the pair $(L,b) = (L,b_1)$ is a classical positive definite quadratic integer lattice and $\Aut(L,b) = O(L,b_1)$ is its orthogonal group. This is the most important use case of the Plesken--Souvignier algorithm.
\end{example}

The option of specifying various forms $G_i$ as input gives the algorithm of Plesken--Souvignier a great deal of flexibility, which allows for important applications outside the classical setting of integer lattices. For instance, one can apply it to automorphism and isomorphism computations of quadratic or Hermitian lattices over number fields via the transfer construction (that is, restriction of scalars); see, for example,~\cite{kirschmerhabil}.
We will use this flexibility to speed up the computation of automorphism groups of positive definite integer lattices.

\subsection*{Overview} Before we continue, we give an overview of the Plesken--Souvignier algorithm (see \cite{plesken-souvignier:1997} for the details) and its limitations. For simplicity, we consider the case of automorphisms only.
We let $e_1,\dots, e_r$ be a fixed basis of $L$, which we use to identify $L$ with $\ZZ^r$.
For $0 \leq k \leq r$, a \textit{$k$-partial automorphism} is a $k$-tuple $(s_1,\dots, s_k) \in L^k$ such that $b(e_i,e_j) = b(s_i,s_j)$ for all $i, j=1,\dots, k$. Thus, a $k$-partial automorphism is just an isometric embedding
\[\langle e_1,\dots, e_k\rangle_\ZZ \lhook\joinrel\longrightarrow L, \, e_i \longmapsto s_i.\]
\begin{enumerate}
 \item Let $m=\max\{(G_1)_{ii} \mid i=1,\dots,r\}$ be the largest entry on the diagonal of $G_1$. Since $G_1$ is positive definite, the set $S_m=\{x \in L \mid b_1(x,x)\leq m\}$ is finite.%
 \item Given a $k$-partial automorphism $(s_1,\dots, s_k)$, it may or may not extend to
 a $(k+1)$-partial automorphism $(s_1,\dots, s_{k+1})$, but if it does, then $s_{k+1} \in S_m$. Thus there are only finitely many ways to extend a $k$-partial automorphism to a $(k+1)$-partial automorphism.
 \item The notion of extension turns the set of all partial automorphisms into a rooted directed tree $T$, whose vertices of level $k$ are the $k$-partial automorphisms.
 The leaves of $T$ at level $r$ correspond to the isometries of $(L, b)$.
 \item
 One can find an automorphism by performing a depth-first search in the tree $T$.
 The smaller the tree, the faster the search.
\end{enumerate}
\begin{center}
     \begin{tikzpicture}[
          x=1cm,y=1.15cm,
          nd/.style={circle,fill=black,inner sep=1.6pt},
          dead/.style={circle,draw=red,inner sep=1.6pt},
          leaf/.style={circle,fill=accent,inner sep=2.2pt}]
        \node[nd,label=above:{\scriptsize $()$}] (r) at (0,0) {};
        \node[nd] (a) at (-1.6,-1) {};
        \node[nd] (b) at (0,-1) {};
        \node[dead] (c) at (1.6,-1) {};
        \draw (r)--(a) (r)--(b) (r)--(c);
        \node[nd] (a1) at (-2.1,-2) {};
        \node[dead] (a2) at (-1.1,-2) {};
        \node[dead] (b1) at (-0.3,-2) {};
        \node[nd] (b2) at (0.5,-2) {};
        \draw (a)--(a1) (a)--(a2) (b)--(b1) (b)--(b2);
        \node[leaf] (a11) at (-2.1,-3) {};
        \node[leaf] (b21) at (0.5,-3) {};
        \draw[dotted] (a1)--(a11) (b2)--(b21);
        \node[red,font=\scriptsize] at (1.6,-1.4) {pruned};
        \node[font=\scriptsize,anchor=west] at (1.0,-3) {automorphisms};
        \node[font=\scriptsize,anchor=east] at (-2.5,-1) {$s_1$};
        \node[font=\scriptsize,anchor=east] at (-2.5,-2) {$s_2$};
        \node[font=\scriptsize,anchor=east] at (-2.5,-3) {$s_r$};  \node[font=\scriptsize,anchor=east] at (-0.1,-1) {$s_2$};
        \node[font=\scriptsize,anchor=east] at (0.5,-2) {$s_1$};
        \node[font=\scriptsize,anchor=east] at (0.5,-3) {$s_r$};
      \end{tikzpicture}
      \end{center}
To make this approach feasible, it is crucial to prune branches of the tree that do not extend to the highest level $r$. That is, one must detect as early as possible when a $k$-partial automorphism cannot be extended to an $r$-partial automorphism. To this end, Plesken and Souvignier introduce various invariants and optimizations, such as the fingerprint, vector sums and Bacher polynomials.
After finding a subgroup $H \leq \Aut(L,b)$, one continues the computation in $T/H$ via orbit representatives and stabilizers.

\begin{remark}\label{remark:coloring}
It will turn out to be useful in \cref{sect:reduced-automorphisms} to extend the Plesken--Souvignier algorithm very slightly by allowing a coloring of the vectors as input, for instance, in terms of linear forms $L \to \ZZ$.
\end{remark}

\subsection*{Limitations} Let us now comment on the challenges faced by the algorithm.
Theoretically, Step (i) is the hardest problem, since it includes the computation of a shortest vector, which is known to be NP-hard under randomized reductions by Ajtai~\cite{Ajtai:1998}.
However, for most interesting lattices (of dimension $\leq 32$) coming from theoretical applications, shortest vectors can be efficiently enumerated using the algorithms of Fincke--Pohst \cite{FinckePohst1985} or Kannan~\cite{Kannan1983}; see also \cite{Micciancio2015}. In any case, it is worth using lattice reduction on $G_1$ so that $m$ is small.
If $m$ is not small enough, then $\lvert S_m \rvert$ will be too large to store in memory and the Plesken--Souvignier algorithm fails.

For a lattice $(L,(b_1,\dotsc,b_n))$, we define its minimum and its set of shortest vectors to be those of $(L,b_1)$.
The practical runtime of the Plesken--Souvignier algorithm depends heavily on the
properties and the distribution of the shortest vectors.
For example, observations indicate that the algorithm performs very well when the diagonal entries of $G_1$ are all equal to the squared length of a shortest vector, as is the case for the Leech lattice.

\begin{definition}
 A lattice $(L,b) = (L,(b_1,\dots, b_n))$ is called
 \begin{enumerate}
  \item perfectly well-rounded if $L$ has a $\ZZ$-basis of shortest vectors;
  \item strongly well-rounded if $L$ is generated by its shortest vectors;
  \item well-rounded if its shortest vectors span
$L \otimes_{\ZZ} \QQ$;
  \item lopsided if $L$ is not well-rounded.
 \end{enumerate}
\end{definition}
Given the set of shortest vectors of $L$, it is straightforward to decide whether $L$ is strongly well-rounded using linear algebra over $\ZZ$, and whether it is well-rounded or lopsided using linear algebra over $\QQ$.
In view of these notions, a lattice $(L, b)$ is less favorable for the classical Plesken--Souvignier algorithm if
\begin{itemize}
    \item
    $(L, b)$ is lopsided; or
    \item
    $(L, b)$ is well-rounded, but not perfectly well-rounded,
\end{itemize}
because in these cases every basis satisfies $m > \min(L)$, resulting in larger sets $S_m$.

\begin{example}
 The Leech lattice is the unique even unimodular $\ZZ$-lattice of rank $24$ and minimum $4$. It is perfectly well-rounded and has $196\,560$ shortest vectors.
 The $24$-dimensional unimodular lattice $E_8^3$ is also perfectly well-rounded.
 The remaining 22 Niemeier lattices are well-rounded but not strongly well-rounded.

 By \cite[Theorem~1.1]{MartinetSchuermann2012}, strongly well-rounded
 lattices of rank at most $9$ are perfectly well-rounded.
 A strongly well-rounded lattice of rank $10$ that is not perfectly
 well-rounded is constructed in \cite[Section~8]{MartinetSchuermann2012}.
\end{example}
For lopsided lattices, $m$ can be large enough that enumerating $S_m$ with the original algorithm of Plesken--Souvignier becomes impractical.
\begin{example}
  Let $m \in \Z_{\geq 3}$. The lattice $L = E_8 \oplus [m]$ has automorphism group $O(E_8)\times \{\pm \id_{[m]}\}$. Via the theta series of $E_8$, we see that the set $S_m$ is of cardinality 
\[ 3+240\sum_{i=1}^{\lfloor m/2 \rfloor}\sigma_3(i), \quad  \text{where } \sigma_3(n) = \sum_{d\mid n} d^3. \]
This cardinality grows like $m^4$. Already for $m=20$, one has $\lvert S_m \rvert = 794\,163$.
\end{example}
One way to avoid this enumeration is to decompose $L$ as $E_8 \oplus [m]$ and then compute $O(E_8)$ and $O([m])$ separately. However, such orthogonal decompositions are rare (as can be seen from the mass formula).
\begin{example}\label{ex:lop-indec}
 For $m\geq 3$, the lattice $L_m$ with Gram matrix
 \[\begin{pmatrix}
 2 & 1 & 0\\
 1 & 2 & 1\\
 0 & 1 & m
 \end{pmatrix}\]
is lopsided and indecomposable.
\end{example}

\section{Variants}\label{sec:variations}
In this section we present variants and improvements of the classical Plesken--Souvignier algorithm for strongly well-rounded, well-rounded, and lopsided lattices. Throughout, let $(L,b)$ be a vector-valued lattice with $b=(b_1,\ldots,b_n)$, where $b_1$ is symmetric and positive definite. As before, lengths, shortest vectors, and orthogonal complements are taken with respect to $b_1$.

\subsection{Strongly well-rounded}\label{subsec:strongly}

Assume now that $L$ is strongly well-rounded with minimum $m$.
Then $\Smin$ is a generating set of shortest vectors. If $L$ is not perfectly well-rounded, then $\Smin$ does not contain a basis. In particular, in the Plesken--Souvignier algorithm one has to select some basis and work with $S_k$ for some $k > m$. But the set $S_{k}$ may be much larger than $\Smin$. 

\begin{example}\label{example:strongly-well-rounded}
In \cite[Dataset E]{LatticeAutBench}, we give an example of an indecomposable strongly but not perfectly well-rounded lattice $L$ of rank $30$ and determinant $2^{12} 3^9  7^2  13^2$. The lattice $L$ is generated by its $62$ shortest vectors of squared length $m = 6$.
But the smallest $k$ for which $S_k$ contains a $\ZZ$-basis is $8$,
and $L$ has $3\,516$ vectors of squared length $8$.
\end{example}

We now present an algorithm for computing $\Aut(L, b)$, which requires backtracking only with the set $\Smin$.
To this end, let $S=\Smin$ and select a generating set $s_1,\dots, s_g$ of $L$ from $S$, which, for reasons of efficiency, should be minimal with respect to inclusion. 
Let $C = \ZZ^g$ with standard basis $(e_1,\dotsc,e_g)$ and let $\psi \colon C \to L$, $e_i \mapsto s_i$. 
We obtain an exact sequence
\[0 \longrightarrow K \longrightarrow  C \xrightarrow[]{\, \ \psi \ \,} L \longrightarrow 0\]
where $K = \ker(\psi)$. 
We define the form on $C$ as the pullback $\psi^*b$, that is, $\psi^*b(x, y) = b(\psi(x), \psi(y))$ for all $x, y \in C$.
Then $(C,\psi^*b)$ is a vector-valued lattice with $c_1$ positive semidefinite, and $\psi \colon C \to L$ preserves the forms.

Let $F$ be the set of $\mathbb Z$-linear maps $\psi' \colon C \to L$
that preserve the forms and satisfy $\psi'(e_i) \in S$ for
$1 \leq i \leq g$. Since such a map is uniquely determined by the
images of the basis vectors $e_i$, we can identify $F$ with the finite set
\[
  \{(f_1,\ldots,f_g) \in S^g
    \mid b(f_i,f_j)=b(s_i,s_j)
    \text{ for all } 1 \leq i,j \leq g\}.
\]
We can enumerate this set using a backtracking search as in the Plesken--Souvignier algorithm.
The following lemma shows that we can easily recover $\Aut(L, b)$ from $F$:

\begin{lemma}
Every $\psi' \in F$
satisfies $K \subseteq \ker(\psi')$ and induces a unique automorphism
$f \in \operatorname{Aut}(L,b)$ with $\psi'=f\circ\psi$.
In particular, the map
\[
  \Aut(L,b) \longrightarrow F,
  \, f \longmapsto f\circ\psi,
\]
is a bijection. Its inverse is given by
$\psi' \mapsto \psi'\circ\phi$, where
$\phi \colon L \to C$ is any $\mathbb Z$-linear section of $\psi$.
In particular, this inverse is independent of the choice of section.
\end{lemma}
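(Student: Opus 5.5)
The key observation is that positive definiteness of $b_1$ on $L$ forces $\psi'$ to kill $K$; everything else is formal. Let $\psi' \in F$ and $x \in K$. Since $\psi'$ preserves the forms, $b_1(\psi'(x),\psi'(x)) = (\psi^*b)_1(x,x) = b_1(\psi(x),\psi(x)) = 0$. Because $b_1$ is positive definite on $L$, this gives $\psi'(x)=0$, so $K \subseteq \ker(\psi')$. As $\psi$ is surjective with kernel $K$, $\psi'$ factors uniquely through $C/K \cong L$. Hence there is a unique $\ZZ$-linear $f\colon L\to L$ with $\psi' = f\circ\psi$.

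Next we check that $f$ is an automorphism.

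\emph{$f$ preserves $b$.} For $x,y\in L$, choose preimages $\tilde x,\tilde y\in C$. Then
\[
b(f(x),f(y)) = b(\psi'(\tilde x),\psi'(\tilde y)) = b(\psi(\tilde x),\psi(\tilde y)) = b(x,y).
\]

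\emph{$f$ is bijective.} The form $b_1$ is positive definite and preserved, so $f$ is injective: $f(x)=0$ gives $b_1(x,x)=0$, hence $x=0$. An injective endomorphism of $\ZZ^r$ has finite-index image, so we still need surjectivity. I expect this to be the main obstacle; there are two ways to get it.
\begin{itemize}
\item Lattice argument: $f(L)$ is a sublattice isometric to $L$, so $\det b_1|_{f(L)} = \det b_1|_L$. On the other hand, $\det b_1|_{f(L)} = [L:f(L)]^2\det b_1|_L$, which forces index $1$.
\item Combinatorial argument: $f$ maps $S$ into $S$, since $f(s_i)=\psi'(e_i)\in S$ and more generally $f$ preserves $b_1$. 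Injectivity makes $f|_S$ a bijection of the finite set $S$. Because $S$ generates $L$, the image $f(L)$ contains $S$ and so equals $L$.
\end{itemize}
I would present the determinant argument, as it is cleanest. Hence $f\in\Aut(L,b)$.

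For the bijection, the map $\Aut(L,b)\to F$, $f\mapsto f\circ\psi$, is well defined. Indeed, $f\circ\psi$ preserves forms as a composition of form-preserving maps, and $f(s_i)\in S$ since automorphisms preserve $b_1$ and hence $\Smin$. The map is injective because $\psi$ is surjective, and it is surjective by the first part.

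For the inverse formula, take any section $\phi$ with $\psi\circ\phi=\id_L$. Writing $\psi'=f\circ\psi$, we get $\psi'\circ\phi = f\circ\psi\circ\phi = f$. This is independent of $\phi$, since $f$ is uniquely determined by $\psi'$.
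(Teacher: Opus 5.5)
Your proposal is correct and follows the paper's proof essentially step for step. Positive definiteness of $b_1$ forces $\psi'(K)=0$, then you factor through $C/K\cong L$, check that $f$ preserves $b$ and is bijective, and read off the bijection and the inverse via a section. The only difference is that you spell out surjectivity of $f$, either via the index--determinant identity or via $f$ permuting the finite generating set $S$, whereas the paper compresses this into the remark that preserving the positive definite form $b_1$ makes $f$ bijective.
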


\begin{proof}
Let $\psi' \in F$. By definition, $\psi'^*b=b$. For every $k \in K$, we have
\[
  b_1(\psi'(k),\psi'(k))
  = (\psi'^*b)_1(k,k)
  = b_1(\psi(k),\psi(k))
  = 0.
\]
Positive definiteness of $b_1$ implies that $\psi'(k)=0$.
Thus $\psi'$ factors uniquely through $C/K \cong L$, yielding
a $\mathbb Z$-linear endomorphism $f$ of $L$ with
$\psi'=f\circ\psi$. Since $\psi$ is surjective and both $\psi$
and $\psi'$ preserve the forms, $f$ preserves every component of $b$.
Since $f$ preserves the positive definite form $b_1$, it is bijective and thus belongs to $\Aut(L, b)$.

Conversely, every $f \in \Aut(L, b)$ preserves the set
$S$ of shortest vectors, so $f\circ\psi \in F$.
It follows that $f \mapsto f \circ \psi$ is a bijection.
Finally, a $\mathbb Z$-linear section $\phi$ exists
because $L$ is free, and
$
  \psi'\circ\phi
  = f\circ\psi\circ\phi
  = f
$.
This proves the formula for the inverse and its independence of
the chosen section.
\end{proof}

\begin{remark}
Experiments on strongly but not perfectly well-rounded lattices in \cite[Dataset E]{LatticeAutBench} yielded speedups by factors ranging from 2 to 25.
Since such lattices are rather scarce, we did not include this variant in the general-purpose implementation shipped with \textsc{OSCAR}. 
\end{remark}

\subsection{Well-rounded}
The idea of the strongly well-rounded case can also be applied to the general well-rounded case. For this, set 
\[m' = \min\{k \in \Z_{\geq 0} \mid \langle S_k \rangle = L\}, \]
and apply the construction of \S{}\ref{subsec:strongly} with $S = S_{m'} \setminus \{0\}$, choosing a generating set of $L$ from $S$.

\subsection{Lopsided}\label{subsec:lopsided}
We now consider improvements for lopsided lattices.
In order to speed up the computation of $\Aut(L, b)$ with the Plesken--Souvignier algorithm, we wish to find additional nontrivial bilinear forms $c_1,\dotsc,c_l$ on $L$ such that
\[ \Aut(L, b) = \Aut(L, (b_1,\dotsc,b_n,c_1,\dotsc,c_l)) \]  holds.
Set $G = \Aut(L, b)$.
Note that
the bilinear forms $c_1,\dotsc,c_l$ we are looking for must be $G$-invariant bilinear forms, and thus are intimately
linked to the structure of the $\Q[G]$-module $V = L \otimes \Q$  (since the space of $\Q$-valued $G$-invariant bilinear forms on $V$ is isomorphic to $\operatorname{Hom}_{\QQ[G]}(V, V^\ast)$, where $V^*$ is the dual of $V$).

More precisely, assume that we are given a decomposition
\[ V = V_1 \oplus \dotsb \oplus V_l \]
of $V$ into $\Q[G]$-submodules. For each $1 \leq i \leq l$, the projection $\pi_i \colon V \to V_i$ induces a $G$-invariant symmetric bilinear form
\[ c_i \colon L \times L \longrightarrow \Q, \, (x, y) \longmapsto b_1(\pi_i(x), \pi_i(y)), \]
which can be scaled to be integer-valued.
The $G$-invariance of the $c_1,\dotsc,c_l$ implies that $\Aut(L, b) = \Aut(L, (b_1,\dotsc,b_n, c_1,\dotsc,c_l))$.
Since the group $G$ is the object we are trying to compute, we do not know the structure of $V$ as a $\Q[G]$-module.
On the other hand, we will see that for lopsided lattices, we can find a nontrivial decomposition of $V$ using the shortest vectors of $L$, without knowledge of $G$.

\begin{definition}
Let $L$ be a lattice. 
Set $M_1=L$. For each $i$ with $M_i \neq 0$, let $L_i=\langle \Smin(M_i)\rangle_{\Z}$ and $M_{i+1}=L_i^\perp\cap M_i$. Let $l$ be such that $M_{l+1}=0$.
The lattices $L_1,\dotsc,L_l$ are called the \emph{successive sublattices} of $L$.
We denote by 
\[\pi_i\colon L \to L_i \otimes \QQ\]
the corresponding orthogonal projections.
\end{definition}

\begin{lemma}
  Let $L_1,\dotsc,L_l$ be the successive sublattices of $L$ and let $G = \Aut(L, b)$. Then $L_i \otimes \Q$, $1 \leq i \leq l$, is $G$-invariant and $L_1 \oplus \dotsb \oplus L_l \subseteq L$ is a finite-index sublattice.
\end{lemma}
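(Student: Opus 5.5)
We argue by induction on $i$, proving simultaneously that $M_i$ and $L_i$ are $G$-invariant, that $L_i\otimes\QQ$ and $M_{i+1}\otimes\QQ$ are complementary in $M_i\otimes\QQ$, and that the recursion terminates. First, $M_1=L$ is trivially $G$-invariant. Suppose $M_i$ is $G$-invariant. Every $g\in G$ restricts to a $\ZZ$-linear bijection of $M_i$ preserving $b_1$, so it permutes the set $\Smin(M_i)$ of shortest vectors of $M_i$. Hence $g(L_i)=L_i$, and so $L_i\otimes\QQ$ is $G$-invariant. Since $g$ preserves $b_1$, it also maps $L_i^\perp$ onto $L_i^\perp$, so $M_{i+1}=L_i^\perp\cap M_i$ is $G$-invariant. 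This closes the induction and gives the invariance claim.

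For termination, note that whenever $M_i\neq 0$ the set $\Smin(M_i)$ is nonempty, so $L_i\neq 0$. Since $b_1$ is positive definite, $L_i\otimes\QQ\cap L_i^\perp\otimes\QQ=0$. Therefore $\rank M_{i+1}<\rank M_i$, because $M_{i+1}\otimes\QQ$ meets the nonzero subspace $L_i\otimes\QQ\subseteq M_i\otimes\QQ$ trivially. The ranks strictly decrease, so some $M_{l+1}=0$.

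For the finite-index statement, we show that $M_i\otimes\QQ=(L_i\otimes\QQ)\oplus(M_{i+1}\otimes\QQ)$. Set $W=M_i\otimes\QQ$ and $U=L_i\otimes\QQ\subseteq W$. Because $b_1$ is positive definite on $W$, we have $W=U\oplus(U^\perp\cap W)$. Moreover, $M_{i+1}=L_i^\perp\cap M_i$ is a saturated sublattice of $M_i$ (it is the kernel of a $\ZZ$-linear map to $\QQ^{\,\cdot}$), so $M_{i+1}\otimes\QQ=U^\perp\cap W$. Iterating this decomposition from $i=1$ to $i=l$ gives
\[ L\otimes\QQ=\bigoplus_{i=1}^l L_i\otimes\QQ. \]
The $L_i$ are pairwise orthogonal: for $j>i$, we have $L_j\subseteq M_j\subseteq M_{i+1}\subseteq L_i^\perp$. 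Hence their sum in $L$ is direct, and it is a sublattice of full rank. A full-rank sublattice of a free $\ZZ$-module of finite rank has finite index, so $L_1\oplus\dotsb\oplus L_l$ has finite index in $L$.

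The only step that needs any care is the identification $M_{i+1}\otimes\QQ=U^\perp\cap W$. This holds because tensoring with $\QQ$ commutes with taking the kernel of the map $x\mapsto(b_1(x,s))_{s\in\Smin(M_i)}$. The rest of the argument is routine.
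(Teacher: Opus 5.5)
Your proof is correct and follows essentially the same route as the paper. It argues by induction that $G$ permutes $\Smin(M_i)$ and preserves orthogonal complements and intersections, and then uses positive definiteness to get $M_i\otimes\QQ=(L_i\otimes\QQ)\oplus(M_{i+1}\otimes\QQ)$, hence strictly decreasing ranks and a full-rank (so finite-index) sum; you additionally supply details the paper leaves implicit, namely the identification $M_{i+1}\otimes\QQ=U^\perp\cap W$ via exactness of $-\otimes\QQ$ and the pairwise orthogonality of the $L_i$.
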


\begin{proof}
  The claim follows by induction and the following two observations. First, the set of shortest vectors of a $G$-invariant lattice is $G$-invariant, and thus spans a $G$-invariant lattice. Second, orthogonal complements and intersections of $G$-invariant lattices are again $G$-invariant.
  If $M_i\ne0$, then $L_i\ne0$. Since $b_1$ is positive definite,
\[
M_i\otimes\mathbb Q
=(L_i\otimes\mathbb Q)\oplus(M_{i+1}\otimes\mathbb Q).
\]
Thus the ranks of the $M_i$ strictly decrease until the construction terminates. In this case, $L_1\oplus\cdots\oplus L_l$ has the same rank as $L$, and hence finite index in $L$.
\end{proof}

\begin{example}
Consider the lattice $L=L_m$ from \Cref{ex:lop-indec} with basis $e_1,e_2,e_3$ and Gram
matrix
\[
 \begin{pmatrix}
  2 & 1 & 0\\
  1 & 2 & 1\\
  0 & 1 & m
 \end{pmatrix},
 \qquad m\geq 3.
\]
Its shortest vectors have squared length $2$ and are given by
\[
 \Smin(L)
 =
 \{\pm e_1,\pm e_2,\pm(e_1-e_2)\}.
\]
Hence its first successive sublattice is
 $L_1=\langle e_1,e_2\rangle_{\ZZ}\cong A_2$.
A direct calculation gives
\[
 M_2=L_1^\perp\cap L
 =\ZZ(e_1-2e_2+3e_3).
\]
Since $M_2$ has rank one, its shortest vectors generate it, and
therefore
 $L_2=M_2$ and
 $M_3=0$.
Moreover, $[L:L_1\oplus L_2]=3$.
Thus, the successive sublattices yield a nontrivial orthogonal
decomposition of $L\otimes\QQ$, although their direct sum is a proper
sublattice of $L$.
\end{example}

Suppose now that $L$ is lopsided. Its shortest vectors do not span
$L\otimes\QQ$, so $M_2\neq 0$ and $L$ has at least two successive
sublattices. The associated projections therefore yield nontrivial
$\Aut(L,b)$-invariant bilinear forms $c_1,\dotsc,c_l$. Adding these
forms does not change the automorphism group:
\[
\Aut(L,b)
=
\Aut(L,(b_1,\dotsc,b_n,c_1,\dotsc,c_l)).
\]
In the Plesken--Souvignier search, the additional forms impose further
compatibility conditions when extending partial automorphisms and can
thereby reduce the search tree $T$.

\subsection{Compression of bilinear forms on finite sets}

The running time of the Plesken--Souvignier algorithm is sensitive to the number of bilinear forms supplied as input.
The following lemma shows how to replace two positive semidefinite forms by a single one preserving all scalar-product information on a prescribed finite set of vectors.
By applying this construction repeatedly, we can compress any finite collection of such forms into one.
After computing the set $S$, we apply this compression to reduce the number of forms tested during backtracking search, which can improve its running time.

\begin{lemma}\label{lem:compression}
Let $b_1,b_2$ be symmetric, positive semidefinite, integer-valued
bilinear forms on $L$, and let $\emptyset\ne S\subseteq L$ be finite.
Set
\[
  M=\max_{s\in S}b_2(s,s).
\]
If $\lambda\in\ZZ_{>0}$ satisfies $\lambda>2M$, then
$b=\lambda b_1+b_2$ has the following property: for all
$x,y,x',y'\in S$,
\[
  b(x,y)=b(x',y')
  \quad\Longrightarrow\quad
  b_i(x,y)=b_i(x',y')
  \quad\text{for }i=1,2.
\]
In particular, if $S$ is an $\Aut(L, (b_1, b_2))$-invariant generating set of $L$, then
\[
  \Aut(L, (b_1, b_2)) = \{g \in \Aut(L, b) \mid g(S) = S\}.
\]
\end{lemma}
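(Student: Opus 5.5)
The plan is to prove the two claims in turn: first the injectivity statement about the combined form $b=\lambda b_1+b_2$ on pairs from $S$, then the description of the automorphism group. For the first claim the idea is to read $b(x,y)$ as a number written in "base $\lambda$". The key step is a bound on $b_2(x,y)$ for $x,y\in S$. Since $b_2$ is positive semidefinite and symmetric, Cauchy--Schwarz holds, so
\[
  |b_2(x,y)|\le\sqrt{b_2(x,x)\,b_2(y,y)}\le M .
\]
Thus $b_2(x,y)\in[-M,M]$ for all $x,y\in S$, an interval containing $2M+1\le\lambda$ integers.

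Now suppose $b(x,y)=b(x',y')$ with $x,y,x',y'\in S$. Then
\[
  \lambda\bigl(b_1(x,y)-b_1(x',y')\bigr)=b_2(x',y')-b_2(x,y).
\]
The right-hand side is an integer of absolute value at most $2M<\lambda$. The left-hand side is an integer multiple of $\lambda$. Hence both sides vanish, which gives $b_1(x,y)=b_1(x',y')$ and $b_2(x,y)=b_2(x',y')$. The only real subtlety is the Cauchy--Schwarz step for a semidefinite form, which is standard: the quadratic polynomial $t\mapsto b_2(x+ty,x+ty)$ is nonnegative, so its discriminant is nonpositive.

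For the second claim, the inclusion "$\subseteq$" is direct. An element $g\in\Aut(L,(b_1,b_2))$ preserves $b$ by linearity, and it satisfies $g(S)=S$ by the assumed invariance of $S$.

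For "$\supseteq$", let $g\in\Aut(L,b)$ with $g(S)=S$, and fix $x,y\in S$. Then $gx,gy\in S$ and $b(gx,gy)=b(x,y)$. The first part, applied to the quadruple $(gx,gy,x,y)$, gives $b_i(gx,gy)=b_i(x,y)$ for $i=1,2$. Since $S$ generates $L$ and $b_i$ is bilinear, it follows that $b_i(gx,gy)=b_i(x,y)$ for all $x,y\in L$. Finally, $g$ is already in $\GL(L)$ because it lies in $\Aut(L,b)$, so $g\in\Aut(L,(b_1,b_2))$.

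I do not expect a serious obstacle. The main point is to use the integrality of both forms together with the strict inequality $\lambda>2M$.
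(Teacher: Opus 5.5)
Your proposal is correct and follows essentially the same argument as the paper. Both use the Cauchy--Schwarz bound $|b_2(x,y)|\le M$ on $S$ and integrality with $\lambda>2M$ to force $b_1(x,y)=b_1(x',y')$ and then $b_2(x,y)=b_2(x',y')$. For the group statement, both apply the first part to $x,y,g(x),g(y)\in S$ and extend to all of $L$ by bilinearity, since $S$ generates $L$.
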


\begin{proof}
By Cauchy--Schwarz, we have
\[
  \lvert b_2(x,y) \rvert
  \leq\sqrt{b_2(x,x)b_2(y,y)}
  \leq M
\]
for all $x, y \in S$.
Suppose that $b(x,y)=b(x',y')$ for $x,y,x',y' \in S$. Then
\[
  \lambda\bigl(b_1(x,y)-b_1(x',y')\bigr)
  = b_2(x',y')-b_2(x,y).
\]
The absolute value of the right-hand side is at most $2M<\lambda$.
Since $b_1(x,y)-b_1(x',y')$ is an integer, it must vanish.
The equality above then also gives $b_2(x,y)=b_2(x',y')$.

For the last assertion, every element of $\Aut(L, (b_1, b_2))$ preserves $b$ and, by assumption, $S$.
Conversely, if $g \in \Aut(L, b)$ satisfies $g(S) = S$, then the first assertion applied to $x,y,g(x),g(y) \in S$ shows that $g$ preserves both $b_1$ and $b_2$ on $S \times S$.
Since $S$ generates $L$, bilinearity implies that $g$ preserves both forms on $L$.
\end{proof}

Note that computing $M$ requires only $|S|$ evaluations of $b_2$, which are already performed during the computation of $S$ itself.

\section{Using the Weyl group}\label{sec:weyl}

In this section, we give an account of the use of root systems and their Weyl groups in the computation of lattice automorphism groups.
This idea was first described by Chenevier in~\cite{Chenevier2020, Chenevier2025} and was subsequently improved by Ta\"ibi~\cite{Taibi2024}, resulting in an efficient algorithm for automorphism group computations of lattices with roots, implemented in PARI/GP~\cite{PARI2}; see also~\cite[2.17]{ChenevierTabi2026}.
We will also recall known results on certain closest vector problems for root lattices, as they will play an important role in lattice invariants (see \S{}\ref{sec:invariants}).
In this section we will consider only classical integer lattices $(L, b)$ with a single symmetric, positive definite bilinear form $b$ on $L$.

\subsection{Preliminaries and notation for root lattices}\label{subsec:weyl-prelim}
See \cite{Ebeling2013, Bourbaki} for proofs and background. In the following, $(L,b)$ is an integral lattice.
A vector $\delta \in L$ with $b(\delta,\delta)=k\neq 0$ is called a \textit{$k$-root} if the reflection 
$s_\delta(x) = x - \frac{2b(\delta,x)}{b(\delta,\delta)}\,\delta$ in the hyperplane $\delta^\perp$ preserves $L$. Then $s_\delta \in \Aut(L, b)$. For simplicity, we only work with $2$-roots and simply call them roots. 

An integral lattice spanned by roots is called a \textit{root lattice}. A root lattice $L$ is called reducible if it is the direct sum of two root lattices $L = L_1 \oplus L_2$ of positive rank. Otherwise it is called irreducible. Irreducible root lattices are classified by their Coxeter--Dynkin diagrams, which are ADE diagrams.

More precisely, denote by $\Phi:=\Phi(L)$ the set of roots of $L$. Let $h \in L$ be such that $h^\perp \cap \Phi = \emptyset$. Then 
\[\Phi^+:=\{\delta \in \Phi \mid b(\delta,h)>0\}\]
is a \textit{positive root system}. The elements of $\Phi^+$ are called \textit{positive} and
$\Phi = \Phi^+ \cup -\Phi^+$. 
A positive root is called \textit{fundamental} (or \textit{simple}) if it is not the sum of two positive roots. The set of fundamental roots will be denoted by $\Delta$.
The fundamental roots $e_1,\dots, e_t$ form a basis of $\langle \Phi \rangle$. 
Its Coxeter--Dynkin diagram is a finite graph whose vertices are $e_1,\dots, e_t$ and where two vertices $e_i,e_j$ are connected by an edge if $b(e_i,e_j)=-1$. If $\langle \Phi \rangle$ is irreducible, then the corresponding Coxeter--Dynkin diagram $\Gamma$ is an $A_i$, $D_j$ or $E_k$ diagram, where $i\geq 1$, $j\geq 4$ and $k=6,7,8$, as in \cref{fig:ADE}.

\begin{figure}[htbp]
\centering
\begin{tikzpicture}[
  x=4.5mm, y=4.5mm, line width=0.4pt,
  root/.style={circle, draw, fill=white, inner sep=0pt, minimum size=3.5pt},
  type/.style={anchor=east, font=\small},
  every label/.style={font=\scriptsize, inner sep=1pt},
  label distance=1pt
]
  \begin{scope}[xshift=2cm, x=6mm]
    \node[type] at (-0.7,0) {$A_n$};
    \draw (0,0) -- (1.75,0) (3.25,0) -- (5,0);
    \node at (2.5,0) {$\cdots$};
    \foreach \pos/\lab in {0/1,1/2,4/{n-1},5/n}
      \node[root, label=below:{$\lab$}] at (\pos,0) {};
  \end{scope}
  \begin{scope}[xshift=8.1cm, x=6mm]
    \node[type] at (-0.7,0) {$D_n$};
    \draw (0,0) -- (0.85,0) (2.15,0) -- (3,0) -- (4,0.65)
      (3,0) -- (4,-0.65);
    \node at (1.5,0) {$\cdots$};
    \node[root, label=below:{$1$}] at (0,0) {};
    \node[root, label=below left:{$n-2$}] at (3,0) {};
    \node[root, label=right:{$n-1$}] at (4,0.65) {};
    \node[root, label=right:{$n$}] at (4,-0.65) {};
  \end{scope}
  \foreach \rank/\offset in {6/0,7/4.8,8/9.7} {
    \begin{scope}[xshift=\offset cm, yshift=-1.3cm]
      \node[type] at (-0.7,0) {$E_{\rank}$};
      \draw (0,0) -- ({\rank-2},0) (2,0) -- (2,1);
      \node[root, label=below:{$1$}] at (0,0) {};
      \node[root, label=above:{$2$}] at (2,1) {};
      \foreach \index in {3,...,\rank}
        \node[root, label=below:{$\index$}] at ({\index-2},0) {};
    \end{scope}
  }
\end{tikzpicture}
\caption{The ADE diagrams. Vertex $i$ represents the fundamental root $e_i$.}
\label{fig:ADE}
\end{figure}

We define the \emph{Weyl} group of $L$ as the group 
\[W(L) := \langle s_\delta \mid \delta \in L \mbox{ is a root } \rangle = \langle s_\delta \mid \delta \in \Delta \rangle \leq \Aut(L, b) \]
generated by reflections in roots of $L$. The fundamental roots suffice to generate it. 

The closures of the connected components of $(L \otimes \RR) \setminus \bigcup_{\delta \in \Phi} \delta^\perp$ are called \emph{Weyl chambers}. The chamber $P$ containing $h$ is called the fundamental Weyl chamber. It is a fundamental domain for the action of $W(L)$ on $L \otimes \RR$. We have $P=\bigcap_{i=1}^t H_{e_i}$,
where $H_e:=\{x \in L \otimes \RR \mid b(x,e)\geq 0\}$ is the positive half-space defined by $e\in L\otimes \RR$. The walls of $P$ are given by the fundamental roots as follows: $P \cap e_1^\perp,\dots, P\cap e_t^\perp$. The \textit{Weyl vector} $\rho \in \langle \Phi \rangle_\QQ \subseteq L \otimes \QQ$ associated to a positive root system is defined as
\[\rho = e_1^\vee +\dots + e_t^\vee\] 
where $e_1^\vee,\dots, e_t^\vee$ form the basis of $\langle \Phi \rangle_\QQ$ dual to $e_1,\dots,e_t$ with respect to $b$, that is, with $b(e_i,e_j^\vee)=1$ if $i=j$ and $b(e_i,e_j^\vee)=0$ otherwise, for $1 \leq i, j \leq t$. It can be computed directly from a positive root system via the equation
\begin{equation}\label{eqn:rho}
    \rho = \frac 1 2 \sum_{\delta \in \Phi^+} \delta.
\end{equation}
To get a positive root system, fix any basis, compute $\Phi$ via a short vector enumeration, and declare $\delta \in \Phi$ to be positive, that is, in $\Phi^+$, if its first nonzero coordinate is positive.

\subsection{Close vectors in root lattices}\label{subsec:close-vectors}
The following lemma will be needed for the construction of characteristic vector sets in \S{}\ref{sec:invariants}. The result is standard. For the reader's convenience, we give a proof.
We use the setup from \S{}\ref{subsec:weyl-prelim}.

Let $R$ be a root lattice with roots $\Phi$. Following \cite[Ch.~VI, \S{}1, Exerc.~24]{Bourbaki}, we call a vector $y \in R^\vee$ \emph{minuscule} if $b(y,\delta) \in \{-1,0,1\}$ for all $\delta \in \Phi$.
If $R$ is irreducible with fundamental roots $e_1,\dots,e_t$ and highest root $\theta = \sum_{i} n_i e_i$, then the nonzero minuscule vectors in the fundamental Weyl chamber are exactly the $e_i^\vee$ with $n_i = 1$ (loc.\ cit.).

\begin{lemma}\label{lem:close_vector_root}\cite[Ch.~VI, \S{}1, Exerc.~24]{Bourbaki}
Let $R$ be a root lattice with positive root system $\Phi^+$ and fundamental roots $e_1,\dots, e_t$. Let $P$ be the corresponding fundamental Weyl chamber and denote by $e_1^\vee,\dots, e_t^\vee$ the dual basis defined by $b(e_i^\vee,e_i) = 1$ and $b(e_i^\vee,e_j) = 0$ for $i \neq j$.
Let $x \in R^\vee \setminus R$. Then there is a unique shortest vector in $(x + R) \cap P$. It is a sum of minuscule vectors, at most one per component.
\end{lemma}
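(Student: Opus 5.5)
Both assertions reduce to the irreducible case. Write $R=R_1\oplus\dots\oplus R_k$ with $R_j$ irreducible. Then $R^\vee=\bigoplus R_j^\vee$, the roots of $R$ are the union of the roots of the $R_j$, and $P=P_1\times\dots\times P_k$. Write $x=\sum x_j$ with $x_j\in R_j^\vee$. Since
\[
(x+R)\cap P=\prod_j (x_j+R_j)\cap P_j
\]
and $b$ is an orthogonal sum, a vector in $(x+R)\cap P$ is shortest exactly when each of its components is shortest in $(x_j+R_j)\cap P_j$. So it suffices to treat each component. If $x_j\in R_j$, the shortest element of that component is $0$, which is the contribution ``at most one per component''. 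So assume $R$ is irreducible and $x\notin R$.

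\emph{Step 1: a shortest element of the whole coset lies in $P$.} Let $y$ be an element of minimal length in $x+R$. For every root $\delta$ we have $y-\delta\in x+R$ and
\[
b(y-\delta,y-\delta)=b(y,y)-2b(y,\delta)+2 .
\]
Minimality therefore gives $b(y,\delta)\le 1$ for all $\delta\in\Phi$. Applying this to $\pm\delta$ gives $b(y,\delta)\in\{-1,0,1\}$, because $b(y,\delta)\in\ZZ$ as $y\in R^\vee$. Hence $y$ is minuscule. Since $W(R)$ acts trivially on $R^\vee/R$ (indeed $s_\delta(z)-z\in\ZZ\delta$) and preserves length, we may move $y$ into $P$ and obtain a minuscule $y\in(x+R)\cap P$ that is shortest in the whole coset $x+R$. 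It is nonzero because $x\notin R$. By the cited fact from Bourbaki, $y=e_i^\vee$ for some $i$ with $n_i=1$.

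\emph{Step 2: uniqueness, the main obstacle.} We must show that no other element of $(x+R)\cap P$ has length $\le b(y,y)$. I would use the dominance order.

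Let $z\in(x+R)\cap P$. Then $z-y\in R$. I claim $z-y$ is a nonnegative combination of the $e_i$, which I would prove via the standard argument. Any dominant weight $z$ in the coset satisfies $z\succeq \mu$ for the unique minuscule dominant weight $\mu$ of that coset. The argument goes as follows: if $z$ is not minuscule, there is a positive root $\delta$ with $b(z,\delta)\ge 2$, and then $z-\delta$ is shorter and still in the coset. Take its dominant $W$-conjugate $z'$. Then $z'\preceq z$, because for dominant $z$ every element of the orbit $W(z-\delta)$ lies below $z$. Iterate; the length strictly decreases, so the process ends at a minuscule dominant vector in the coset. This vector is unique: distinct minuscule $e_i^\vee$ lie in distinct cosets of $R$, because the minuscule coweights together with $0$ represent $R^\vee/R$ bijectively. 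Hence it equals $y$.

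\emph{Step 3: strict length comparison.} Write $z=y+\beta$ with $\beta=\sum m_ie_i$, all $m_i\ge0$, and $\beta\neq0$. Then
\[
b(z,z)-b(y,y)=b(z,\beta)+b(y,\beta)\geq b(y,\beta)\geq 0,
\]
using that $z,y\in P$ pair nonnegatively with each $e_i$. Moreover $b(z,\beta)+b(y,\beta)=b(\beta,\beta)+2b(y,\beta)>0$. Hence $z$ is strictly longer, so $y$ is the unique shortest vector of $(x+R)\cap P$.

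Reassembling the components gives a unique shortest vector of $(x+R)\cap P$, equal to a sum over components of one minuscule $e_i^\vee$ or $0$.
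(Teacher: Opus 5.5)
Your proof is correct, but it takes a longer route than the paper. The paper minimizes directly over $(x+R)\cap P$. If a minimizer $y$ had $b(y,\delta)\ge 2$ for some positive root $\delta$, the dominant $W$-conjugate of $y-\delta$ would be a shorter element of $(x+R)\cap P$. So every minimizer is dominant and minuscule, hence equal to the unique $e_i^\vee$ with $n_i=1$ in the class $x+R$, and existence and uniqueness come out together.

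You instead minimize over the whole coset to get existence, and prove uniqueness via the dominance order and the identity $b(z,z)-b(y,y)=b(\beta,\beta)+2b(y,\beta)$. This gives a little more: $y$ is the $\preceq$-least dominant element of $x+R$, and it is shortest in all of $x+R$, not just in $P$.

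The cost is that your route rests on the claim that the dominant conjugate of $z-\delta$ lies below $z$. Your stated reason only restates this claim. The standard fact $wz\preceq z$ concerns the orbit of the dominant vector $z$, not of the non-dominant $z-\delta$. In general, $\mu\preceq\lambda$ with $\lambda$ dominant does not force the dominant conjugate of $\mu$ below $\lambda$: take $\lambda=0$ and $\mu=-\alpha$ for a simple root $\alpha$, whose dominant conjugate is the highest root.

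The claim is nevertheless true in your situation. Set $k=b(z,\delta)\ge 2$, so that $z-\delta=(1-\tfrac1k)z+\tfrac1k s_\delta z$ is a convex combination of $z$ and $s_\delta z$. Using $wz\preceq z$ and $ws_\delta z\preceq z$, the difference $z-w(z-\delta)$ is a nonnegative real combination of the $e_i$. It also lies in $R=\bigoplus\ZZ e_i$, so its coefficients are nonnegative integers.

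More simply, you can drop the detour entirely. Your own descent in Step 2 already shows that any dominant $z\neq y$ in the coset is not minuscule, since $y$ is the only dominant minuscule vector there. Each descent step strictly lowers the length, so such a $z$ is strictly longer than the endpoint $y$ of the iteration. That is precisely the paper's argument, and it makes both the dominance order and Step 3 unnecessary.
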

\begin{proof}
We may assume that $R$ is irreducible. 
Let $y$ be a vector of shortest length in $(x + R) \cap P$. Let $\delta$ be any positive root. Suppose that $b(y,\delta) \geq 2$. Then $y-\delta$ is strictly shorter than $y$ and we can use an element of the Weyl group to transport it to $P$. Since $W(R)$ acts trivially on the discriminant group $R^\vee/R$, the image of $y-\delta$ is actually in $(x+R)\cap P$, which contradicts the minimality of $y$. Hence $b(y, \delta) \in \{0,1\}$ for all positive roots $\delta$, that is, $y$ is minuscule.
By \cite[Lemma 1.18]{Ebeling2013} and \cite[Ch.~VI, \S{}2, Exerc.~5\,a)]{Bourbaki}, the minuscule $e_i^\vee$ form a system of representatives of the nonzero cosets of $R^\vee/R$. Hence $y$ is the unique minuscule $e_i^\vee$ in $x + R$.
\end{proof}

\subsection{The reduced automorphism group}\label{sect:reduced-automorphisms}
Let $L$ now be any lattice.
We now explain Chenevier's improvements for the automorphism group computation.
The Weyl group acts simply transitively on the set of Weyl chambers and decomposes $\Aut(L, b)$ as a semidirect product
\[\Aut(L, b) = W(L) \rtimes \Aut(L,\rho)\]
where 
$\Aut(L,\rho) = \{g \in \Aut(L, b) \mid g(\rho)= \rho\}$.
We call the quotient
\[\Autred(L) = O(L)/W(L) \cong \Aut(L,\rho)\]
the \emph{reduced automorphism group} of $L$ and identify it with $\Aut(L,\rho)$ when $\rho$ is fixed.  
The decomposition $\Aut(L, b) = W(L) \rtimes \Aut(L,\rho)$ reduces the problem to finding generators for $W(L)$ and $\Aut(L, \rho)$ separately.
For the Weyl group $W(L)$, generators are readily available after computing a fundamental root system.
Generators for the typically much smaller group $\Aut(L,\rho)$ can be determined with the usual automorphism group algorithm, using the linear form $x \mapsto b(\rho, x)$ as an additional invariant
(cf. \Cref{remark:coloring}).

\subsection{Invariant subspaces for the reduced automorphism group}
Since the reduced automorphism group $\Aut(L,\rho)$ is (in general) smaller than $\Aut(L)$,
restricting from $\Aut(L)$ to $\Aut(L,\rho)$ may yield a finer decomposition into invariant subspaces and therefore additional invariant forms.

To find them, let $R$ be the root sublattice of $L$ and $\rho$ a Weyl vector.
Then $\Aut(L,\rho)$ acts as a permutation of the fundamental roots which preserves the (reducible) Coxeter--Dynkin diagram $\Gamma$ of $R$. Therefore, it also permutes the connected components of any given type $A_i$, $D_j$ or $E_k$. We call the union of all components of a given type $\tau$ the \emph{$\tau$-part} of $\Gamma$. Further, $\Aut(L,\rho)$ can act on individual components as follows:
the components of type $A_i$, $i>1$, could be ``flipped'', the $3$ leaves of $D_4$ could be permuted, as well as two leaves of $D_j$, $j>4$, and $E_6$ also has a ``flip''. For $L=R$ we obtain in total
\begin{enumerate}
\item $\lceil i/2 \rceil$ orbits for each $A_i$-part;
\item $2$ orbits for each $D_4$-part;
\item $j-1$ orbits for each $D_j$-part with $j>4$;
\item $4$, $7$ and $8$ orbits for each $E_6$-, $E_7$- or $E_8$-part, respectively. \end{enumerate}
Following Ta\"ibi \cite{Taibi2024}, we use the orbit sums, which are invariant vectors and hence yield invariant linear forms.

As an $\Aut(L,\rho)$-module, $R\otimes\QQ$ is a direct sum of permutation modules, one for each orbit. The isotypic components of $R\otimes\QQ$ yield further invariant bilinear forms, which can then be compressed with \Cref{lem:compression} and used in \Cref{sec:targeted-enumeration}.

For $L=R$ this decomposition is determined by the Coxeter--Dynkin diagram $\Gamma$ alone. In general, however, the natural map
\[\Aut(L,\rho) \longrightarrow \Aut(R,\rho), \ g \longmapsto g|_R\]
is not surjective, and its image can have a finer isotypic decomposition, because the embedding in $L$ can break the symmetry of the $\tau$-parts.
We just give a simple example here: 
Let $R_1$ and $R_2$ be two irreducible root sublattices of $L$ of the same type and $\pi_i \colon L \to R_i \otimes \QQ$ the corresponding orthogonal projections.
If the abelian groups $\pi_1(L)/R_1$ and $\pi_2(L)/R_2$ are not isomorphic, then $R_1$ and $R_2$ cannot be permuted. So there is another invariant vector and the orthogonal subspaces split further. This gives additional invariant bilinear forms.

\section{Targeted enumeration}\label{sec:targeted-enumeration}

One of the main computational tasks of the Plesken--Souvignier algorithm is the initial computation of short vectors. 
Let $(L,b)$ be a vector-valued lattice with $b_1$ positive definite and $V = L \otimes \QQ$ its rational span.
Furthermore, let $H \leq \Aut(L,b)$ be a subgroup. For a nonzero element $e$ of $L$, one needs to determine a finite $H$-invariant set $S \subseteq L$ containing $e$ (without explicit knowledge of $H$).
The original algorithm of Plesken--Souvignier uses 
$S(e) = \{x \in L \mid b_1(x, x) = b_1(e, e)\}$ and takes 
$S = \bigcup_{i=1}^r S(e_i)$ where $e_1,\dots e_r$ form a basis of $L$.
The sets $S(e)$ can be determined using lattice enumeration. But this can become a bottleneck when $b(e, e)$ or the rank of $L$ is large. 

We now describe a different choice of $S$, which exploits an orthogonal sum decomposition of $V$, given for example by the successive sublattices of $L$ (see \S{}\ref{subsec:lopsided}).
More precisely, assume that $V$ admits a $b_1$-orthogonal decomposition $V = \bigoplus_{i=1}^l V_i$ with $H$-invariant subspaces $V_i$ and such that $V_1$ is fixed pointwise by $H$. We allow $V_1=0$. 
Denote by $\pi_i \colon V \to V_i \hookrightarrow V$ the corresponding projections. We set $c_i(x,y) = b(\pi_i(x),\pi_i(y))$ and $c = (c_1,\dots, c_l)\colon V \times V \to (\QQ^n)^l$. Note that the $c_i$ are vector-valued bilinear forms on $V$ and that each first component $(c_i)_1$ is positive definite when restricted to $V_i$.
By assumption, the bilinear forms $c_i$ are $H$-invariant and $\pi_1\circ g = \pi_1$ for all $g \in H$. Hence for any vector $e \in L$, the set 
\[S(e) =\{x \in L \mid c(x, x) = c(e, e) \text{ and } \pi_1(x)=\pi_1(e)\}\] 
is finite, $H$-invariant and contains $e$. We now describe how to compute it.

To this end, for $k=1,\dots, l$, we set $\Pi_k = \sum_{i=1}^k \pi_i$ to be the orthogonal projection onto $V_1\oplus \dots \oplus V_k$, $L_{(k)} = \Pi_k(L)$ and $\hat L_k = \pi_k(L)$.
Furthermore, we define $S_1(e)=\{\pi_1(e)\}$,
\[ S_k(e) = \{x \in L_{(k)} \mid \pi_1(x)=\pi_1(e) \text{ and }  c_i(x, x) = c_i(e, e) \text{ for all $i = 2,\dotsc,k$}\}. \]
Note that $L_{(l)} = L$ and that $S_l(e)=S(e)$.

The sets $S_1(e),\dotsc,S_l(e)$ can be determined recursively as follows.
Assume that $S_k(e)$ has been computed.
Since the bilinear form $c_k$ is positive definite on $V_k$, the set 
\[T_k(e) = \{t \in \hat{L}_k \mid c_k(t,t) = c_k(e,e)\}\] 
is finite and can be enumerated with the algorithm of Fincke--Pohst~\cite{FinckePohst1985}. 
Thus
\[S_{k+1}(e) = \{s+t \mid s \in S_k(e),\, t\in T_{k+1}(e),\, s+t \in L_{(k+1)}\}.\]
Note that $s+t$ is a priori only in $L_{(k)}\oplus \hat L_{k + 1}$ and so the containment in $L_{(k+1)}$ is the actual condition here. 

\begin{remark}
Turning this approach into an efficient alternative to a plain lattice enumeration requires various improvements, which we now sketch:

\begin{enumerate}
    \item Compute LLL-reduced bases of $\hat{L}_i$, $1 \leq i \leq l$, and combine them into a block-diagonal basis respecting the direct sum $\hat L_1 \oplus \dotsb \oplus \hat L_l$. Since $L \subseteq \hat L_1\oplus \dotsb \oplus \hat L_l$, we can represent all vectors with respect to this basis. 
    \item As part of the preprocessing, for $k=2,\dots, l$, we compute integral matrices $D_k$ and $U_k$ such that $x \mapsto x D_kU_k^{-1}$ represents the inclusion $\phi_{k} \colon L_{(k)} \to \hat{L}_1 \oplus \dots \oplus \hat{L}_{k}$, where the matrix $D_k$ is diagonal with entries $d_{k,i+1} \mid d_{k,i}$ and $U_k \in \GL_{n_k}(\ZZ)$ is invertible, where $n_k = \sum_{i=1}^k \dim_\Q(V_i)$. Then checking whether $x:=s+t$ is contained in $L_{(k+1)}$ reduces to one matrix-vector multiplication to determine $y = x \cdot U_{k+1}$, followed by testing whether $y_i$ is divisible by $d_{k+1,i}$ for all $i$. We can moreover delete the $j$th column of $U_k$ if $d_{k,j}=1$.   
    \item Naively testing all $\lvert S_k \rvert\cdot \lvert T_{k+1} \rvert$ pairs $(s,t)$ for containment in $L_{(k+1)}$ is expensive.
    To avoid this exhaustive matching, let
    \[\psi_k\colon \hat{L}_1 \oplus \dots \oplus \hat{L}_{k+1} \to \left(\hat{L}_1 \oplus \dots \oplus \hat{L}_{k+1}\right)/L_{(k+1)}\]
    be the quotient map and label the elements $s\in S_{k}(e)$ by $\psi_{k}(s)$, that is, we partition $S_k(e)$ into fibers with respect to $\psi_k$. For a given $t \in T_{k+1}(e)$, one then just looks for all $s \in S_{k}(e)$ with $\psi_{k}(s) = - \psi_{k}(t)$, that is, all elements of $\psi_{k}^{-1}(\{-\psi_k(t)\}) \cap S_k(e)$.
    Then one needs only $\lvert S_k(e) \rvert + \lvert T_{k+1}(e) \rvert$ evaluations of $\psi_k$.
    \item Since $t \in T_k(e)$ implies $-t \in T_k(e)$, it suffices to enumerate $T_k(e)$ only up to sign. This also applies to $S_{k}(e)$, if $\pi_1(e)=0$.
    \item In practice we want to compute $S(e_1), \dots, S(e_r)$ for several vectors $e_1,\dots, e_r \in L$ (for example, a basis of $L$). In this case the preprocessing can be shared, but also other parts of the computation such as the enumeration of the $T_k(e_i)$ and the computation of further invariant forms can be combined.
    \item Since for $k = 1,\dotsc,l$ the set $S_{k}(e)$ is $H$-invariant, its span $W_{k}(e)=\langle S_{k}(e)\rangle$ is $H$-invariant, and the orthogonal projection onto $W_k(e)$ commutes with $H$. Moreover, the vector $\alpha_{k}=\sum_{s \in S_{k}(e)} \pi_k(s)$ is fixed by $H$. These give an invariant bilinear form $w_k$ defined via the orthogonal projection and an invariant linear form $a_k(x) = b(\alpha_{k},x)$, $x \in L$. Let $e' \in L$ be another vector (for example, $e'=e_i$). The previous invariants can be used at each stage to replace $S_{k}(e')$ by the subset
    \[ \widetilde{S}_{k}(e')=\{s \in S_{k}(e_i) \mid w_k(s,s) = w_k(e',e') \text{ and } a_k(e') = a_k(s)\}. \]
    Then $\tilde{S}_l(e')$ is still $H$-invariant and contains $e'$. Note that $\alpha_k$ can only be nonzero if $-\id_L \notin H$, for example, for $H=\Aut(L,\rho)$ as in \Cref{sec:weyl}.
    \item It is more efficient to compute and work with $\pi_j(W_{k})\subseteq V_j$ for $j=2,\dots, k$, because linear algebra has cubic complexity in the dimension and $V_j$ is of smaller dimension than $V$. Note that $\pi_1(W_k)$ provides no additional information, because $V_1$ is fixed pointwise by $H$ in any case. 
\end{enumerate}
\end{remark}

\begin{example}
  We illustrate the idea with a small example. Consider the lattice $L$ with Gram matrix
  \[ G = \begin{pmatrix}
       2 &  1 &  0 &   0 \\
       1 &  8 & -2 &   2 \\
       0 & -2 &  8 &   0 \\
       0 &  2 &  0 & 192
     \end{pmatrix}, \]
  which is LLL-reduced.
  The lattice has only the two roots $\pm e_1$, and we choose $\rho = e_1/2$ as the Weyl vector.
  For $e = e_4$ and $H = \Aut(L, \rho)$ we wish to determine an $H$-invariant set $S \subseteq L$ containing $e$.
  The Plesken--Souvignier algorithm would determine the initial candidate set
  \[ \{ x \in L \mid b(x, x) = b(e, e)\} \]
  which in this case has cardinality $58$.

  We now compare this with our targeted enumeration.
  Since $\rho$ is a Weyl vector, the space $V_1 = \Q \cdot \rho$ is pointwise fixed by $H$.
  We choose $l = 2$ and $V_2 = V_1^\perp$.
  In particular, we have
  \[ \pi_1 \colon V \longrightarrow V, \ x \longmapsto (2x_1 + x_2)\cdot \rho \]
  and $\pi_2 = \id_V - \pi_1$.
  The semidefinite bilinear forms $c_1$ and $c_2$ induced by $\pi_1$ and $\pi_2$ have Gram matrices
  \[ G_1 = \begin{pmatrix}
       2 & 1   & 0 & 0 \\
       1 & 1/2 & 0 & 0 \\
       0 & 0   & 0 & 0 \\
       0 & 0   & 0 & 0
     \end{pmatrix}, \qquad G_2 = G - G_1. \]
  The set
  \[ S_2(e) = \{ x \in L \mid \pi_1(x) = \pi_1(e) \text{ and } c_2(x, x) = c_2(e, e) \}\]
  is exactly $\{\pm e_4\}$ and hence has only two elements.
\end{example}

\section{Isometry search}\label{sec:isometry}
The variant of the Plesken--Souvignier algorithm searching for an isometry $f\colon L \to L'$ between two lattices also benefits from the ideas of the previous sections, provided that the additional data on both sides are constructed so that every isometry matches them. This is the case for the successive sublattices of \S{}\ref{subsec:lopsided} and the induced forms $c_i$.
For the compression of \Cref{lem:compression}, one uses the same $\lambda$ on both sides. For the Weyl group reduction, any choice of Weyl vectors $\rho,\rho'$ works. The fixed spaces $V_1$ are typically built from orbit sums of fundamental roots. These orbits (and any other invariants) have to be identified in such a way that any isometry with $f(\rho)=\rho'$ matches them. Then $V_1$ can be used for targeted enumeration as in \S{}\ref{sec:targeted-enumeration} and the extra forms as input to the backtracking search. If the invariants do not match, for example, if the numbers of successive sublattices differ, then $L$ and $L'$ cannot be isometric.

\section{Complete invariants and canonical forms of lattices}\label{sec:invariants}

In this section we consider the problem of computing complete invariants of lattices. Such a complete invariant can be thought of as a computable function $i$ on the set of all positive definite lattices such that
\[
  i(L) = i(L') \quad\Longleftrightarrow\quad L \cong L'
\]
for all lattices $L$ and $L'$. (A more precise definition is given below.)
Common applications include efficiently partitioning collections of lattices into isomorphism classes and looking up lattices up to isomorphism.

A closely related problem is that of a ``canonical form'' of a lattice, which is the special case where $i(L)$ itself is a lattice (represented by its Gram matrix) with $i(L) \cong L$. An algorithm for computing a canonical form has been developed by Dutour Sikiri\'c, Haensch, Voight and van~Woerden~\cite{SHVW}.
Another variation of the problem is obtained by restricting the set of lattices under consideration.
An example is the Bacher--Venkov invariant introduced by Allombert
and Chenevier in~\cite{MR4950160}, which is efficiently computable and a posteriori turned out to be a complete invariant for unimodular lattices of rank $29$ having no
nonzero vectors of squared length at most $2$.
Both approaches first construct a natural finite set of vectors and then encode it as a graph.

Here, we will view invariants from a more abstract point of view,
which will allow us to naturally incorporate the action of the Weyl group.
The result will be a refined version of the invariant of~\cite{SHVW},
which performs well in practice; see \S{}\ref{subsec:inv-exp}.

\subsection{Invariants and functors}

It is natural to express the notion of invariants using the language of categories.
To this end, denote by $\LAT$ the category whose objects are the positive definite lattices and whose morphisms are the isometries.
Then a lattice invariant is a functor $F \colon \LAT \to \NN$, where $\NN$ is regarded as a discrete category. %
An invariant $F$ is complete if and only if the functor $F$ is essentially injective, that is, if for all lattices $L$ and $L'$, we have that $F(L) \cong F(L')$ implies $L \cong L'$.

The analogous problem for graphs is addressed by graph canonicalization algorithms implemented in packages such as \textsc{nauty}~\cite{MR3131381} or~\textsc{bliss}~\cite{bliss1,bliss2}.
Let $\GRAPH$ be the category whose objects are finite undirected graphs with loops and edges labeled by finite-length sequences of rational numbers $\QQ^{[\NN]}$ and whose morphisms are isomorphisms.
Efficient graph canonicalization provides us with a complete invariant $\GRAPH \to \NN$.
The construction of a complete invariant for lattices can therefore be reduced to the construction of an essentially injective functor $F \colon \LAT \to \GRAPH$.
Since larger graphs make the invariant more expensive to compute, our aim is to find functors $F$ whose images are small graphs.

\begin{example}\label{ex:cd-functor}
For a lattice $L$ denote by $\Phi(L)$ the set of its roots. 
Then $\Phi\colon \LAT \to \SET$
is a functor. 
Let $\Gamma(L)$ denote the Coxeter--Dynkin diagram of the root sublattice of $L$ labeled in a canonical fashion. This yields a functor $\Gamma \colon \LAT \to \GRAPH$ which maps all morphisms to the identity.
This gives an invariant which is not complete, since $\Gamma$ is not essentially injective. There are non-isomorphic lattices with isomorphic root sublattices. 
The construction depends somewhat awkwardly on a canonical labeling of the $ADE$-graphs and sends maps to the identity. This will be remedied later by defining the category of Weyl-marked lattices $\WLAT$, which also paves the way for a generalization. 
\end{example}

We now consider another construction of invariants of lattices.
Let $L$ be a lattice and let $X \subseteq L$ be a finite subset.
We view $X$ as a (labeled) graph $G(X)$ by connecting any two vertices $x, y \in X$ (including $x=y$) with an edge labeled by $b(x,y)$.
Note that this construction is functorial in the sense that if $X \subseteq L$, then any morphism $f \colon L \to L'$ induces a morphism $G(X) \to G(f(X))$ of graphs.

To turn this into an invariant we now consider finite subfunctors $V \colon \LAT \to \SET$ of the forgetful functor $\LAT \to \SET$, that is,
for every lattice $L$ we have a finite subset $V(L) \subseteq L$ such that, in particular, $f(V(L)) = V(L')$ for every isomorphism $f \colon L \to L'$.
We call such a functor $V$ a \textit{vector set functor}.
If in addition $\langle V(L)\rangle = L$ for all lattices $L$, we call $V$ \textit{characteristic}.
Since the graph construction $G$ is functorial, for a vector set functor $V$ we obtain an invariant
\[ G_V \colon \LAT \longrightarrow \GRAPH,\ L \longmapsto G(V(L)). \]
The following lemma shows that characteristic vector set functors can be used to define complete invariants.

\begin{lemma}\label{lem:gvinj}
    If $V \colon \LAT \to \SET$ is a characteristic vector set functor, then $G_V$ is essentially injective.
\end{lemma}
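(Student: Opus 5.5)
Suppose $L$ and $L'$ are lattices with $G_V(L)\cong G_V(L')$ in $\GRAPH$. We have to produce an isometry $L\to L'$. Fix an isomorphism of labeled graphs $\sigma\colon G(V(L))\to G(V(L'))$. Concretely, $\sigma$ is a bijection $V(L)\to V(L')$ that preserves edge labels, so
\[
b'(\sigma(x),\sigma(y))=b(x,y)\qquad\text{for all } x,y\in V(L),
\]
including $x=y$ via the loops. The plan is to extend $\sigma$ linearly and check that this gives a well-defined isometry. The argument mirrors the proof of the lemma in \S{}\ref{subsec:strongly}.

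\textbf{Step 1: a presentation of $L$.} Let $X=V(L)=\{x_1,\dots,x_g\}$, let $C=\ZZ^g$ with standard basis $(\epsilon_i)$, and define
\[
\psi\colon C\to L,\ \epsilon_i\mapsto x_i,\qquad \psi'\colon C\to L',\ \epsilon_i\mapsto \sigma(x_i).
\]
Since $V$ is characteristic, $\langle V(L)\rangle=L$ and $\langle V(L')\rangle=L'$. Because $\sigma$ is a bijection, both $\psi$ and $\psi'$ are surjective.

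\textbf{Step 2: the kernels agree.} By the displayed identity, $\psi^*b=\psi'^*b'$ on basis vectors, hence on all of $C$ by bilinearity. For $k\in\ker\psi$ we get
\[
b'(\psi'(k),\psi'(k))=b(\psi(k),\psi(k))=0,
\]
so $\psi'(k)=0$ by positive definiteness of $b'$. This gives $\ker\psi\subseteq\ker\psi'$, and the symmetric argument gives equality. This is the only step with real content, and it rests entirely on positive definiteness together with the generating property.

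\textbf{Step 3: the isometry.} By Step 2, $\psi'$ factors through $C/\ker\psi\cong L$. This yields a $\ZZ$-linear bijection $f\colon L\to L'$ with $f\circ\psi=\psi'$. It preserves the forms because both $\psi$ and $\psi'$ pull back to the same form on $C$ and $\psi$ is surjective. Hence $f$ is an isometry extending $\sigma$, and $L\cong L'$, which is essential injectivity.
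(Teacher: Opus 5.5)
Your proof is correct and takes essentially the same route as the paper: extend the label-preserving bijection $V(L)\to V(L')$ linearly and check that linear relations among $V(L)$ are carried to relations among $V(L')$. The only difference is in that key step. You use positive definiteness (a relation maps to a vector of norm $0$, exactly as in the lemma of \S{}\ref{subsec:strongly}), whereas the paper pairs $\sum_x a_x h(x)$ against the spanning set $V(L')$ and invokes non-degeneracy of $b'$, then obtains injectivity from non-degeneracy of $b$ rather than from your symmetric kernel argument.
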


\begin{proof}
   Let $(L, b)$ and $(L', b')$ be lattices.
   Assume that $f \colon G(V(L)) \to G(V(L'))$ is a graph isomorphism. 
   By the choice of edge labels, this induces a map $h \colon V(L) \to V(L')$ such that $b'(h(x), h(y)) = b(x, y)$ for all $x, y \in V(L)$.
   Now suppose that $\sum_{x \in V(L)} a_xx=0$. Then, for every $y\in V(L)$,
\[
  b'\left(\sum_x a_xh(x),h(y)\right)
    = \sum_x a_xb(x,y)=0.
\]
Since $V(L')$ spans $L'$ and $b'$ is non-degenerate, it follows
that $\sum_{x \in V(L)} a_xh(x)=0$. Thus $h$ extends uniquely to a
$\mathbb Z$-linear map $L\to L'$ preserving the bilinear forms.
This map is injective since $L$ is non-degenerate and surjective since $V$ is characteristic.
\end{proof}

\begin{example}
Let $L$ be a lattice.
For $k \geq 0$ denote by $S_k(L)$ the subset of vectors of $L$ of squared length at most $k$, and by $\Smin(L)$ the subset of shortest vectors of $L$. 
Then $L \mapsto \Smin(L)$ and $L \mapsto S_k(L)$
define vector set functors, which are not characteristic.
Denote by $m(L) = \min\{k \in \NN \mid \langle S_k(L) \rangle = L\}$ the smallest squared radius of a ball containing a generating set of $L$.
Then $V_m \colon L \mapsto S_{m(L)}(L)$ defines a characteristic vector set functor $V_{m}$.
In combination with graph canonicalization we can use the essentially injective functor $G_{V_{m}}$ to obtain a complete invariant for lattices.
Since the graphs $G(V_m(L))$ are impractically large, this choice of $V$ is not practical.
\end{example}

\begin{remark}
In~\cite{SHVW} the authors study \textit{characteristic vector sets} of lattices, which correspond to subsets of the form $V(L)$, where $V$ is a characteristic vector set functor.
These characteristic vector sets are used in combination with the graph construction $G$ to define complete invariants of lattices.
To obtain a practical invariant,
in~\cite[\S{}2.2]{SHVW} the authors construct characteristic vector sets $\Vcv(L)$ inductively using closest vectors to the successive sublattices. These characteristic vector sets yield graphs that are often small enough to be of practical use, except when $L$ has a large automorphism group, which forces $\Vcv(L)$ to be large.
\end{remark}

To obtain smaller graphs, we now describe how to incorporate the action of the Weyl group via the functor $\Gamma$ from \Cref{ex:cd-functor}.
First, note that $\Gamma$ does not fit within the framework of characteristic vector sets of~\cite{SHVW} since the fundamental roots of a lattice $L$ are in general only $\Aut(L,\rho)$-invariant, but not $\Aut(L)$-invariant.

\begin{lemma}\label{lemma:span}
Let $L$ be a lattice.
Suppose that $B \subseteq L$ spans $L$ and is $W(L)$-invariant. Let $E$ be a fundamental root system of $L$ and let $P$ be the corresponding Weyl chamber. Then $E \cup (B \cap P)$ spans $L$.
\end{lemma}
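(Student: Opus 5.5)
Let $M = \langle E \cup (B \cap P)\rangle_\ZZ \subseteq L$. Since $B$ spans $L$, it suffices to show $B \subseteq M$. The key point is that $M$ is invariant under the Weyl group $W(L)$. Indeed, $W(L)$ is generated by the simple reflections $s_e$ for $e \in E$. For $x \in M$ we have $s_e(x) = x - b(e,x)\,e$; here $b(e,x)\in\ZZ$ because $L$ is integral and roots have squared length $2$. Since $e \in E \subseteq M$, this shows $s_e(M) \subseteq M$. Hence $w(M) \subseteq M$ for every $w \in W(L)$.

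Next we use that the fundamental Weyl chamber $P$ is a fundamental domain for the action of $W(L)$ on $L \otimes \RR$, as recalled in \S\ref{subsec:weyl-prelim}. For any $x \in B$, choose $w \in W(L)$ with $w(x) \in P$. By $W(L)$-invariance of $B$, we get $w(x) \in B \cap P \subseteq M$. Then $x = w^{-1}(w(x)) \in w^{-1}(M) \subseteq M$. Thus $B \subseteq M$, and therefore $L = \langle B\rangle \subseteq M \subseteq L$.

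One degenerate case needs a remark. If $L$ has no roots, then $E=\emptyset$, $W(L)$ is trivial and $P = L\otimes\RR$, so the claim is immediate. The argument above covers this case uniformly.

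I expect the only step needing care to be the invariance of $M$ under $W(L)$. It relies on two facts: $W(L)$ is generated by the reflections in the fundamental roots, which is stated in the paper, and these reflections act by integral translates along $e$. Including $E$ in the spanning set is exactly what makes this work. Everything else is the standard fundamental-domain property of $P$.
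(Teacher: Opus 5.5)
Your proof is correct and is essentially the paper's argument. The paper writes $B=\bigcup_{w\in W(L)} w(B\cap P)$ and notes that $w(x)\equiv x \bmod \langle E\rangle$, which is exactly your $W(L)$-invariance of $M=\langle E\cup(B\cap P)\rangle$ phrased as a congruence.
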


\begin{proof}
Since $P$ is a fundamental domain for the action of the Weyl group $W(L)$, and $B$ is $W(L)$-invariant, $B = \bigcup_{w \in W(L)} w(P \cap B)$. Note that $w(b) \equiv b \mod \langle E \rangle$ and so $B$ and $P \cap B$ span the same space modulo $\langle E \rangle$.
\end{proof}

The previous lemma shows that factoring out the action of the Weyl group allows one to reduce the size of the generating sets under consideration.
As before, instead of factoring out the action of the Weyl group we will consider automorphisms fixing a chosen Weyl vector.
Since the choice of a Weyl vector is not canonical, we pass to \textit{Weyl-marked lattices}, which are pairs $(L, \rho)$ consisting of a lattice $L$ together with a Weyl vector $\rho$ (which can be zero if $L$ has no roots).
A morphism from $(L,\rho)$ to $(L',\rho')$ is a morphism $f \colon L \to L'$ of lattices with $f(\rho)=\rho'$. 
By $\WLAT$ we denote the category of Weyl-marked lattices.
Since the Weyl group acts transitively on the chambers, the forgetful functor $\WLAT \to \LAT$ is essentially bijective. This means that $(L,\rho)$ is isomorphic to $(L',\rho')$ if and only if $L\cong L'$.
Note that the automorphism group of $(L,\rho)$ in the category $\WLAT$ is exactly the reduced automorphism group $\Aut(L,\rho)$ of $L$.
Let $P_\rho$ denote the unique Weyl chamber containing $\rho$ and let $E_\rho$ denote the corresponding fundamental root system.

\begin{prop}\label{prop:vcapgamma}
    Let $V\colon \LAT \to \SET$ be a vector set functor.
    It induces functors 
  \[V \colon \WLAT \longrightarrow \SET, \quad (L,\rho)  \longmapsto V(L) \cap P_\rho\] 
and 
    \[G_{V}^{\mathrm{red}} \colon \WLAT \longrightarrow \GRAPH, \ (L,\rho) \longmapsto G((V(L,\rho) \setminus \Phi(L))\cup E_\rho).\]

    If $V$ is characteristic, then the reduced functor $G_{V}^{\mathrm{red}}$ is essentially injective.
\end{prop}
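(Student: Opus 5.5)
The plan has two parts. First I check functoriality of both constructions. Then I prove essential injectivity by reducing to \Cref{lem:gvinj}, or rather to its proof, using \Cref{lemma:span} to get the spanning property.

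\emph{Functoriality.} Let $f\colon (L,\rho)\to(L',\rho')$ be a morphism in $\WLAT$. Since $f$ is an isometry, it maps roots to roots, so $f(\Phi(L))=\Phi(L')$. It also maps the set of root hyperplanes $\delta^\perp$ to the corresponding set, hence maps Weyl chambers to Weyl chambers. Because $f(\rho)=\rho'$, we get $f(P_\rho)=P_{\rho'}$. The fundamental roots are the positive roots that are not sums of two positive roots, and positivity is read off from the walls of the chamber, so $f(E_\rho)=E_{\rho'}$. Since $V$ is a vector set functor, $f(V(L))=V(L')$, and therefore $f(V(L)\cap P_\rho)=V(L')\cap P_{\rho'}$. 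Combining these,
\[
f\bigl((V(L,\rho)\setminus\Phi(L))\cup E_\rho\bigr)=(V(L',\rho')\setminus\Phi(L'))\cup E_{\rho'}.
\]
Applying the functorial graph construction $G$ then gives the functor $G_V^{\mathrm{red}}$. The assignments respect identities and composition because they are just restrictions of $f$.

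\emph{Spanning.} Let $X=(V(L)\cap P_\rho\setminus\Phi(L))\cup E_\rho$. I claim that $X$ spans $L$ whenever $V$ is characteristic.
\begin{enumerate}
\item The set $B=V(L)$ spans $L$ and is $\Aut(L)$-invariant, so in particular it is $W(L)$-invariant.
\item By \Cref{lemma:span}, $E_\rho\cup(B\cap P_\rho)$ spans $L$.
\item Any root lying in $B\cap P_\rho$ is already in $\langle E_\rho\rangle$, since the fundamental roots form a basis of $\langle\Phi\rangle$. Removing such roots therefore does not change the span, so $\langle X\rangle=L$.
\end{enumerate}

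\emph{Essential injectivity.} Suppose $g\colon G(X)\to G(X')$ is a graph isomorphism, where $X'$ is the corresponding set for $(L',\rho')$. The argument in the proof of \Cref{lem:gvinj} only used that the vertex sets span their lattices and that the edge labels record $b$. So it yields a bijection $h\colon X\to X'$ preserving $b$, and this extends uniquely to an isometry $\tilde h\colon L\to L'$. The existence of $\tilde h$ already gives $L\cong L'$. Since the forgetful functor $\WLAT\to\LAT$ is essentially bijective, this implies $(L,\rho)\cong(L',\rho')$, which is the required essential injectivity.

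\emph{Main obstacle.} I expect the main care to be needed in the functoriality step, specifically in checking that $E_\rho$ is determined intrinsically by $\rho$. This requires that $\rho$ lies in the interior of a unique chamber, which holds because $b(\rho,e_i)=1>0$ for every fundamental root $e_i$. With that in place, isometries fixing the markings transport $E_\rho$ to $E_{\rho'}$ and no choices enter the construction.
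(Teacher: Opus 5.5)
Your proof is correct. The functoriality and spanning steps match the paper's; the two arguments part ways only at the end.

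\textbf{Your final step.} You extract an isometry $\tilde h\colon L\to L'$ from the graph isomorphism. You then invoke the essential bijectivity of the forgetful functor $\WLAT\to\LAT$ (transitivity of $W(L')$ on Weyl vectors) to obtain \emph{some} isomorphism $(L,\rho)\to(L',\rho')$. This is shorter. It also shows that, for essential injectivity alone, adding $E_\rho$ matters only for getting a spanning set, and removing $\Phi(L)$ plays no role.

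\textbf{The paper's final step.} The paper shows that $\tilde h$ itself is a morphism in $\WLAT$. The loop labels identify the roots among the vertices, and by construction these are exactly $E_\rho$ and $E_{\rho'}$. So $\tilde h(E_\rho)=E_{\rho'}$, and hence $\tilde h(\rho)=\rho'$, since $\rho$ is the sum of the basis dual to $E_\rho$.

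\textbf{What the paper's version buys.} Every graph isomorphism $G_V^{\mathrm{red}}(L,\rho)\to G_V^{\mathrm{red}}(L',\rho')$ is then induced by a unique isomorphism of Weyl-marked lattices. Consequently the automorphism group of the graph is exactly $\Aut(L,\rho)=\Autred(L)$. This is what \Cref{rem:canonical-gram} relies on when it says the canonical labeling is unique up to $\Autred(L)$, which is what makes $\Gcan$ well defined. Your argument proves the proposition as stated but does not give this stronger identification.
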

\begin{proof}
If $f\colon (L,\rho) \to (L',\rho')$ is an isomorphism, then $f(V(L)) = V(L')$ and $f(P_\rho)=P_{\rho'}$. Hence $V(L,\rho)$ is well defined. 
Since $f(\Phi(L)) = \Phi(L')$ and $f(E_\rho)=E_{\rho'}$, 
 $f$ restricts to a graph isomorphism
\[G((V(L,\rho) \setminus \Phi(L))\cup E_\rho) \longrightarrow G((V(L',\rho') \setminus \Phi(L'))\cup E_{\rho'})\] 
and the second functor $G_{V}^{\mathrm{red}}$ is well defined. 

Let $(L,\rho)$ and $(L',\rho')$ be two Weyl-marked lattices and let
\[f \colon G_{V}^{\mathrm{red}}(L,\rho) \to G_{V}^{\mathrm{red}}(L',\rho')\]
be a graph isomorphism. Since $\langle \Phi(L)\rangle = \langle E_\rho\rangle$, by \Cref{lemma:span} we have that $(V(L,\rho) \setminus \Phi(L))\cup E_\rho$ generates $L$.
As in the proof of \Cref{lem:gvinj}, the map $f$ extends to an isometry $L\to L'$.
Since $f$ preserves roots and $E_\rho$ is the set of roots of $(V(L,\rho) \setminus \Phi(L))\cup E_\rho$ (and similarly for $L'$), the set $E_\rho$ is mapped to $E_{\rho'}$ and therefore $\rho$ is mapped to $\rho'$ by the linear extension of $f$.
\end{proof}

Combining \Cref{prop:vcapgamma} with the fact that the forgetful functor $\WLAT \to \LAT$ is an essential bijection, we can
construct new complete invariants from any characteristic vector set functor $V$. More precisely, the functor
\[ G_{V}^{\mathrm{red}} \colon \LAT \longrightarrow \GRAPH, \ L \longmapsto \operatorname{Can}\bigl(G((V(L,\rho) \setminus \Phi(L))\cup E_\rho)\bigr), \]
where $\operatorname{Can}$ denotes graph canonicalization and $\rho$ is an arbitrary Weyl vector of $L$,
defines a complete invariant.
Since it is advisable to use a vector set functor $V$ for which the sets $V(L)$ are small, 
in practice we always choose $V = \Vcv$ from~\cite{SHVW}.

\begin{remark}\label{rem:canonical-gram}
The canonical labeling of the graph $G_V^{\mathrm{red}}(L)$ is an ordering of the vertices $v_1,\dots, v_N \in (V(L,\rho)\setminus\Phi(L))\cup E_\rho$ which is unique up to the automorphism group of the graph, that is, up to $\Autred(L)$ (see the proof of \Cref{prop:vcapgamma}).
Following \cite{SHVW} this yields a canonical Gram matrix as follows. Fix some basis $B$ of $L$ and let the rows of $Q \in \ZZ^{N \times r}$ be the coordinate vectors of $v_1,\dots v_N$. Write $Q = HU$ with $H$ in column Hermite normal form and $U \in \GL_{r}(\ZZ)$. Then the canonical Gram matrix of $L$ is defined as $\Gcan := UG_BU^T$. 
A different canonical ordering changes $Q$ to $QF$ for some $F \in \Autred(L)$ and a different choice of basis $C = XB$ changes $G_B$ to $XG_BX^T$, $U$ to $UX^{-1}$.
This does not change $\Gcan$ and the matrix is indeed canonical. 
\end{remark}

\begin{remark}
The main computational advantage of $G_{V}^{\mathrm{red}}$ is that its
vertex set is typically much smaller than $V(L)$, so that graph
canonicalization is applied to a smaller graph. A naive implementation
first computes $V(L)$ and then forms
\[
  ((V(L)\setminus\Phi(L))\cap P_\rho)\cup E_\rho.
\]
Whenever the construction of $V$ permits, one can instead compute
$V(L,\rho)$ directly, thereby also reducing the intermediate computational effort. 
\end{remark}

\begin{remark}
  Using the result of \S{}\ref{subsec:close-vectors}, we can improve
  the computation of $\Vcv(L, \rho)$ as follows.
  We briefly recall the definition of $\Vcv$.
  For a positive definite integer lattice $M$ and $x \in M \otimes \RR$, denote by $\cv(M, x)$ the set of vectors of $M$ closest to $x$
  and for a subset $X \subseteq M$, denote by $\sv(X)$ the set of its shortest vectors.

  The set $\Vcv(L)$ is defined recursively as follows:
  For a well-rounded lattice $L$, define
  \[\Vcv(L) = \Smin(L) \cup \bigcup_{x+L_1 \in L/L_1} \sv(x+L_1), \]
  where $L_1 = \langle \Smin(L) \rangle$ is the sublattice of $L$ generated by its shortest vectors.
  If $L$ is not well-rounded, let $\pi$ denote the orthogonal projection from $L$ to $L_1^\perp \otimes \QQ$ and $\overline{L}_1$ the primitive closure of $L_1$ in $L$. We define
  \[ \Vcv(L) = \Vcv(\overline{L}_1) \cup \bigcup_{x \in \Vcv(\pi(L))} \sv(\pi^{-1}(\{x\})).\]
  We have $\sv(x+L_1) = \{x-y \mid y \in \cv(L_1,x)\}$
  and for $x=\pi(x') \in \pi(L)$ we have
  $\sv(\pi^{-1}(\{x\})) = \sv(x'+\overline{L}_1)$. Thus, for both sets, the computation of shortest vectors reduces to a closest vector problem where the target $x$ is in $L_1^\vee$ and the lattice is $L_1$ or $\overline{L}_1$, respectively.
  
  If $L_1$ is a root lattice and we further intersect with a Weyl chamber $P_\rho$ to compute the reduced characteristic vector set $\Vcv(L,\rho)$, then \Cref{lem:close_vector_root} reduces closest vector problems at the first level of recursion, in $L_1\cap P_\rho$, to a simple lookup.
  To compute closest vectors to $y \in L_1^\vee$ in $\overline{L}_1$, decompose the coset into $L_1$-cosets
  \[y+ \overline{L}_1=\bigsqcup_{x + L_1 \in \overline{L}_1/L_1} (y+x) + L_1.\]
  Then $\sv(y + \overline{L}_1)$ is the set of shortest vectors in the finite set $\bigcup_{x + L_1 \in \overline{L}_1/L_1} \sv((y+x) + L_1)$. Since we are only interested in solutions in the Weyl chamber, we form the subset
  \[\sv(P_\rho \cap (y + \overline{L}_1)) \subseteq \bigcup_{x + L_1 \in \overline{L}_1/L_1} \sv(P_\rho \cap (y+x + L_1)).\]
\end{remark}

\section{Experiments}\label{sec:exp}

We implemented the improved Plesken--Souvignier algorithm in \textsc{Hecke}~\cite{Hecke}, part of the open-source computer algebra system \textsc{OSCAR}~\cite{OSCARBook,OSCAR}.
To benchmark the improvements, we have run both the improved Plesken--Souvignier algorithm and the complete lattice invariant computation on a large dataset of lattices.

\subsection{Dataset}\label{subsec:datasets}

We compiled a publicly available collection of lattices~\cite{LatticeAutBench}.
Our experiments use 13\,083 lattices of ranks $17$ to $26$, grouped into the following three subfamilies:

\begin{enumerate}
  \item[(A)]
  10\,814 lattices of rank 17, the genus representatives of
  $\II_{(17, 0)}{2}^{1}_{1}{9}^{-1}{5}^{-1}$, $\II_{(17, 0)}[{2}^{2}{4}^{1}]_{3}{7}^{-1}$ and $\II_{(17, 0)}{2}^{3}_{7}{3}^{1}{5}^{1}$,
  \item[(B)]
  368 lattices of rank 25: the odd unimodular lattices of rank 25 with no norm-$1$ vectors,
  \item[(C)]
    1\,901 lattices of rank 26: the odd unimodular lattices of rank 26 with no norm-$1$ vectors.
\end{enumerate}

The dataset in (A) has been compiled by the first-named author during investigations on K3 surfaces without $\FF_q$-rational points.
For (B) and (C), we use representatives provided by Chenevier--Ta\"ibi; see \cite{ChenevierTabi2026}.
The distributions of the ranks of the root sublattices are shown in
\Cref{fig:root-rank-histogram}. To quantify lopsidedness, we consider the
parameter
\[
  \lambda(L)
  = \frac{\min(L)}
  {\min\{k \in \ZZ_{\geq 0} \mid
    \langle S_k(L)\rangle_{\QQ}=L\otimes\QQ\}}.
\]
It satisfies $0<\lambda(L)\leq 1$, with equality if and only if $L$ is
well-rounded. Thus $L$ is lopsided if and only if $\lambda(L)<1$.
The distributions of $\lambda(L)$ are shown in
\Cref{fig:lopsidedness-distributions}.

\begin{figure}[htbp]
  \centering
  \begin{tikzpicture}[y=0.1cm]
    \begin{scope}[x=0.48cm]
      \foreach \height/\ticklabel in {0/0,10/10,20/20,30/30}
      {
        \draw[gray!25] (-0.5,\height) -- (8.5,\height);
        \node[anchor=east,font=\scriptsize] at (-0.7,\height) {\ticklabel};
      }
      \foreach \rank/\count
        [evaluate=\rank as \position using \rank-9] in {
          9/5,10/179,11/1265,12/2736,13/3103,14/2243,15/1057,
          16/217,17/9}
      {
        \pgfmathsetmacro{\percentage}{\count/10814*100}
        \filldraw[fill=MidnightBlue!45,draw=MidnightBlue!80!black]
          ({\position-0.34},0) rectangle ({\position+0.34},\percentage);
      }
      \foreach \rank [evaluate=\rank as \position using \rank-9] in {9,...,17}
        \node[anchor=north,font=\scriptsize] at (\position,-1.4) {\rank};
      \draw[->] (-0.55,0) -- (8.8,0);
      \draw[->] (-0.55,0) -- (-0.55,31.5);
      \node[font=\small] at (4,34.5) {(A) \quad $n=10\,814$};
    \end{scope}

    \begin{scope}[xshift=5.15cm,x=0.18cm]
      \foreach \height/\ticklabel in {0/0,10/10,20/20,30/30}
      {
        \draw[gray!25] (-0.5,\height) -- (23.5,\height);
        \node[anchor=east,font=\scriptsize] at (-0.7,\height) {\ticklabel};
      }
      \foreach \rank/\count
        [evaluate=\rank as \position using \rank-2] in {
          2/1,4/1,6/1,9/1,10/1,11/1,12/1,13/2,14/3,15/4,
          16/7,17/8,18/16,19/27,20/44,21/68,22/80,23/73,24/26,25/3}
      {
        \pgfmathsetmacro{\percentage}{\count/368*100}
        \filldraw[fill=MidnightBlue!45,draw=MidnightBlue!80!black]
          ({\position-0.36},0) rectangle ({\position+0.36},\percentage);
      }
      \foreach \rank [evaluate=\rank as \position using \rank-2] in {
        2,5,10,15,20,25}
        \node[anchor=north,font=\scriptsize] at (\position,-1.4) {\rank};
      \draw[->] (-0.55,0) -- (23.8,0);
      \draw[->] (-0.55,0) -- (-0.55,31.5);
      \node[font=\small] at (11.5,34.5) {(B) \quad $n=368$};
    \end{scope}

    \begin{scope}[xshift=10.2cm,x=0.158cm]
      \foreach \height/\ticklabel in {0/0,10/10,20/20,30/30}
      {
        \draw[gray!25] (-0.5,\height) -- (26.5,\height);
        \node[anchor=east,font=\scriptsize] at (-0.7,\height) {\ticklabel};
      }
      \foreach \rank/\count in {
          0/1,2/2,3/3,4/3,5/1,6/3,7/2,8/3,9/4,10/12,11/9,12/15,
          13/14,14/34,15/39,16/69,17/82,18/136,19/173,20/251,
          21/254,22/284,23/199,24/168,25/76,26/64}
      {
        \pgfmathsetmacro{\percentage}{\count/1901*100}
        \filldraw[fill=MidnightBlue!45,draw=MidnightBlue!80!black]
          ({\rank-0.36},0) rectangle ({\rank+0.36},\percentage);
      }
      \foreach \rank in {0,5,10,15,20,25}
        \node[anchor=north,font=\scriptsize] at (\rank,-1.4) {\rank};
      \draw[->] (-0.55,0) -- (26.8,0);
      \draw[->] (-0.55,0) -- (-0.55,31.5);
      \node[font=\small] at (13,34.5) {(C) \quad $n=1\,901$};
    \end{scope}

    \node[rotate=90,font=\small] at (-1.15,15) {Percentage of lattices};
    \node[font=\small] at (7.15,-7) {Rank of the root sublattice};
  \end{tikzpicture}
  \caption{Distributions of the ranks of the root sublattices in
    datasets~(A)--(C). Bar heights give percentages within each dataset.}
  \label{fig:root-rank-histogram}
\end{figure}

\begin{figure}[htbp]
  \centering
  \begin{tikzpicture}[x=0.1cm,y=0.48cm]
    \foreach \percentage in {0,25,50,75,100}
    {
      \draw[gray!25] (\percentage,-0.5) -- (\percentage,7.5);
      \node[anchor=north,font=\scriptsize] at (\percentage,-0.65)
        {\percentage};
    }
    \foreach \value/\percentage [count=\position from 0] in {
      {\frac{1}{15}}/0.10,{\frac{1}{9}}/0.05,{\frac{1}{6}}/0.18,
      {\frac{1}{5}}/0.64,{\frac{1}{4}}/0.70,{\frac{1}{3}}/4.04,
      {\frac{1}{2}}/94.21,1/0.08}
    {
      \filldraw[fill=MidnightBlue!45,draw=MidnightBlue!80!black]
        (0,{\position-0.32}) rectangle (\percentage,{\position+0.32});
      \node[anchor=east,font=\scriptsize] at (-2,\position) {$\value$};
      \node[anchor=west,font=\scriptsize] at ({\percentage+1.2},\position)
        {\percentage\%};
    }
    \draw[->] (0,-0.5) -- (102,-0.5);
    \node[font=\small] at (50,8.4) {(A) \quad $n=10\,814$};
    \node[anchor=north,font=\small] at (50,-1.45)
      {Percentage of lattices};
  \end{tikzpicture}

  \medskip
  \begin{minipage}{0.47\textwidth}
    \centering
    \begin{tikzpicture}[x=0.045cm,y=0.55cm]
      \foreach \percentage in {0,25,50,75,100}
      {
        \draw[gray!25] (\percentage,-0.5) -- (\percentage,1.5);
        \node[anchor=north,font=\scriptsize] at (\percentage,-0.65)
          {\percentage};
      }
      \foreach \value/\percentage [count=\position from 0] in {
        {\frac{2}{3}}/99.18,1/0.82}
      {
        \filldraw[fill=MidnightBlue!45,draw=MidnightBlue!80!black]
          (0,{\position-0.32}) rectangle (\percentage,{\position+0.32});
        \node[anchor=east,font=\scriptsize] at (-5,\position) {$\value$};
        \node[anchor=west,font=\scriptsize] at ({\percentage+2.5},\position)
          {\percentage\%};
      }
      \draw[->] (0,-0.5) -- (104,-0.5);
      \node[font=\small] at (50,2.4) {(B) \quad $n=368$};
      \node[anchor=north,font=\small] at (50,-1.55)
        {Percentage of lattices};
    \end{tikzpicture}
  \end{minipage}%
  \hfill
  \begin{minipage}{0.47\textwidth}
    \centering
    \begin{tikzpicture}[x=0.045cm,y=0.55cm]
      \foreach \percentage in {0,25,50,75,100}
      {
        \draw[gray!25] (\percentage,-0.5) -- (\percentage,1.5);
        \node[anchor=north,font=\scriptsize] at (\percentage,-0.65)
          {\percentage};
      }
      \foreach \value/\percentage [count=\position from 0] in {
        {\frac{2}{3}}/96.58,1/3.42}
      {
        \filldraw[fill=MidnightBlue!45,draw=MidnightBlue!80!black]
          (0,{\position-0.32}) rectangle (\percentage,{\position+0.32});
        \node[anchor=east,font=\scriptsize] at (-5,\position) {$\value$};
        \node[anchor=west,font=\scriptsize] at ({\percentage+2.5},\position)
          {\percentage\%};
      }
      \draw[->] (0,-0.5) -- (104,-0.5);
      \node[font=\small] at (50,2.4) {(C) \quad $n=1\,901$};
      \node[anchor=north,font=\small] at (50,-1.55)
        {Percentage of lattices};
    \end{tikzpicture}
  \end{minipage}
  \caption{Distributions of $\lambda(L)$ in datasets~(A)--(C).
    The values are shown as exact fractions.}
  \label{fig:lopsidedness-distributions}
\end{figure}

\subsection{Automorphism group computation}

We benchmarked three implementations on datasets (A)--(C): our new algorithm in \textsc{Hecke} version 0.40.4 and Julia version 1.12.0,
Ta\"ibi's \texttt{qfautors} function~\cite{ChTa} implemented in PARI/GP version 2.18.0, and Magma's
\texttt{Automorphism\-Group} function (version V2.28-8).
The \texttt{depth} and \texttt{Bacher-depth} parameters are zero by default for \texttt{New} and for \texttt{qfautors}. Magma ran with default parameters.
For each implementation and dataset,
\Cref{tab:automorphism-group-timings} reports the mean, median, and maximum
running times in seconds for single-threaded computations. The benchmarks were obtained on a server with an Intel
Xeon E5-2643 v3 CPU clocked at 3.40~GHz and 378~GiB of RAM. Each computation was subject to a time limit of 300 minutes
and a memory limit of 32~GB. Magma timed out for 16 lattices in dataset (A), but finished for all other lattices.

The maximum runtimes in \Cref{tab:automorphism-group-timings} confirm that the Plesken--Souvignier algorithm struggles with lopsided lattices. As the maximum runtime for dataset (A) shows, this obstruction is lifted by the new algorithm.
Its median runtimes are comparable to those of \texttt{qfautors} and lower than those of \texttt{Magma}, while it outperforms its competitors on average in the present datasets (A)--(C).
However, for perfectly well-rounded lattices without roots such as the Leech lattice, the new algorithm does not contribute anything and essentially coincides with the standard Plesken--Souvignier algorithm.

\subsection{Characteristic vector sets and canonical forms}\label{subsec:inv-exp}

We have also implemented the computation of $\Vcv(L,\rho)$ and the canonical form.
The latter uses the graph
functionality in \textsc{OSCAR}, which delegates these tasks to
\textsc{nauty}~\cite{MR3131381}. For each dataset,
\Cref{tab:characteristic-vector-set-data} reports the mean running time to
compute $\Vcv(L,\rho)$, the mean total running time for the canonical form computation
(which includes the computation of $\Vcv(L, \rho)$), the mean cardinality of
$\Vcv(L,\rho)$, and the mean reduction factor
\[
  \frac{\lvert \Vcv(L) \rvert}{\lvert \Vcv(L,\rho) \rvert}.
\]
The last two columns have been rounded to the nearest integer.

\begin{table}[htbp]
  \caption{Mean running times and vector set statistics for the computation
    of canonical forms on datasets~(A)--(C). Times are given in seconds.}
  \label{tab:characteristic-vector-set-data}

  \centering
  \setlength{\tabcolsep}{8pt}
  \begin{tabular}{lrrrr}
    \toprule
    & \multicolumn{2}{c}{Mean time}
    & \multicolumn{2}{c}{Mean vector set statistics} \\
    \cmidrule(lr){2-3}
    \cmidrule(lr){4-5}
    Dataset
      & $\Vcv(L,\rho)$
      & Canonical form
      & $\#\Vcv(L,\rho)$
      & $\#\Vcv(L)/\#\Vcv(L,\rho)$ \\
    \midrule
    (A) & 0.002 & 0.004 &  23 &  102 \\
    (B) & 0.008 & 0.047 & 116 & 3598 \\
    (C) & 0.011 & 0.055 & 162 & 6303 \\
    \bottomrule
  \end{tabular}
\end{table}
Canonicalization with nonreduced vector sets as in \cite{SHVW} is infeasible on the present datasets (A)--(C). Since the reduced vector sets are orders of magnitude smaller, canonicalization of the lattices in (A)--(C) is now possible in a reasonable running time.
The timings show that the main bottleneck has shifted to the graph-theoretic
part of the canonical form computation, particularly for datasets~(B) and~(C), where
computing the characteristic vector set accounts for only about one fifth
of the total running time. Nevertheless, the total canonical form computation remains
of the same order of magnitude as the automorphism group computation with
our algorithm; see \Cref{tab:automorphism-group-timings}. Thus a canonical
form can be computed at a cost comparable to that of computing the
automorphism group.

Restricting to the Weyl chamber substantially reduces the characteristic
vector sets. The mean reduction factor is about $10^2$ for dataset~(A), and
increases to approximately $3.6\cdot 10^3$ and $6.3\cdot 10^3$ for
datasets~(B) and~(C), respectively. These larger reductions are consistent
with the greater root-sublattice ranks visible in
\Cref{fig:root-rank-histogram}.

\bibliography{main.bib}
\bibliographystyle{amsalpha}

\end{document}